\renewcommand{\d}{\mathrm{d}}
\newcommand{\dx}{\mathrm{d}x}
\newcommand{\dt}{\mathrm{d}t}
\renewcommand{\rho}{\varrho}
\let\TeXchi\chi
\newbox\chibox
\chibox \hbox{\raise\dp0 \box 0 }
\def\chi{\copy\chibox}
\def\Xint#1{\mathchoice
    {\XXint\displaystyle\textstyle{#1}}%
    {\XXint\textstyle\scriptstyle{#1}}%
    {\XXint\scriptstyle\scriptscriptstyle{#1}}%
    {\XXint\scriptscriptstyle\scriptscriptstyle{#1}}%
    \!\int}
\def\XXint#1#2#3{\setbox0=\hbox{$#1{#2#3}{\int}$}
    \vcenter{\hbox{$#2#3$}}\kern-0.5\wd0}
\def\bint{\Xint-}
\def\dashint{\Xint{\raise4pt\hbox to7pt{\hrulefill}}}
\def\XXiint#1#2#3{\setbox0=\hbox{$#1{#2#3}{\iint}$}
    \vcenter{\hbox{$#2#3$}}\kern-0.5\wd0}
\author[N. Liao]{Naian Liao}
\address{Naian Liao,
Fachbereich Mathematik, Universit\"at Salzburg,
Hellbrunner Str. 34, 5020 Salzburg, Austria}
\email{naian.liao@sbg.ac.at}
\newtheorem{proposition}{Proposition}[section]
\newtheorem{theorem}{Theorem}[section]
\newtheorem{lemma}{Lemma}[section]
\newtheorem{remark}{Remark}[section]
\numberwithin{equation}{section}
\numberwithin{theorem}{section}
\numberwithin{proposition}{section}
\numberwithin{lemma}{section}
\numberwithin{remark}{section}
\newcommand{\noi}{\noindent}
\newcommand{\dsty}{\displaystyle}
\newcommand{\al}{\alpha}
\newcommand{\be}{\beta}
\newcommand{\Gm}{\Gamma}
\newcommand{\gm}{\gamma}
\newcommand{\dl}{\delta}
\newcommand{\Dl}{\Delta}
\newcommand{\lm}{\lambda}
\newcommand{\Lm}{\Lambda}
\newcommand{\varep}{\varepsilon}
\newcommand{\vp}{\varphi}
\newcommand{\sig}{\sigma}
\newcommand{\om}{\omega}
\newcommand{\z}{\zeta}
\newcommand{\nn}{\mathbb{N}}
\newcommand{\rr}{\mathbb{R}}
\newcommand{\rn}{\rr^N}
\newcommand{\bl}[1]{\mathbf{#1}}
\newcommand{\dvg}{\operatorname{div}}
\newcommand{\essup}{\operatornamewithlimits{ess\,sup}}
\newcommand{\essinf}{\operatornamewithlimits{ess\,inf}}
\newcommand{\essosc}{\operatornamewithlimits{ess\,osc}}
\newcommand{\loc}{\operatorname{loc}}
\newcommand{\dist}{\operatorname{dist}}
\newcommand{\pl}{\partial}
\newcommand{\intl}{\int\limits}
\def\Xint#1{\mathchoice
    {\XXint\displaystyle\textstyle{#1}}%
    {\XXint\textstyle\scriptstyle{#1}}%
    {\XXint\scriptstyle\scriptscriptstyle{#1}}%
    {\XXint\scriptscriptstyle\scriptscriptstyle{#1}}%
    \!\int}
\def\XXint#1#2#3{\setbox0=\hbox{$#1{#2#3}{\int}$}
    \vcenter{\hbox{$#2#3$}}\kern-0.5\wd0}
\def\bint{\Xint-}
\def\dashint{\Xint{\raise4pt\hbox to7pt{\hrulefill}}}
\def\dashiint{\bint\kern-0.15cm\bint}
\newcommand{\ovl}[3]{\int_{#1}^{#2}\kern-#3pt\raise4pt\hbox to7pt{\hrulefill}\ }
\newcommand{\ovll}[3]{\intl_{#1}^{#2}\kern-#3pt\raise4pt\hbox to7pt{\hrulefill}\ }
\newcommand{\tvl}[2]{\iint_{#1}\kern-#2pt\raise4pt\hbox to7pt{\hrulefill}\ }
\newcommand{\bye}{
\begin{document}
\title[Continuity for the Stefan problem]{Local continuity of weak solutions to the Stefan problem involving the singular $p$-Laplacian}
\date{}
\maketitle
\begin{abstract}
We establish the local continuity of locally bounded weak solutions (temperatures) to the doubly singular parabolic equation
modeling the phase transition of a material:
\[
\partial_t \beta(u)-\Delta_p u\ni 0\quad\text{ for }\tfrac{2N}{N+1}<p<2,
\]
where $\be$ is a maximal monotone graph with a jump at zero and $\Delta_p$ is the $p$-Laplacian.
Moreover, a logarithmic type modulus of continuity is quantified,
which has been conjectured to be optimal.
\vskip.2truecm
\noindent{\bf Mathematics Subject Classification (2020):} 
Primary, 35K59; Secondary, 35D30, 35K92, 35R35, 35R70, 80A22

\vskip.2truecm
\noindent{\bf Key Words:} 
Stefan problem, parabolic $p$-Laplacian, local continuity, intrinsic scaling, expansion of positivity
\end{abstract}
\section{Introduction}
The classical Stefan problem,
which describes the temperature of a material undergoing a phase change,
such as the melting of ice to water, gives rise to the following
nonlinear parabolic partial differential equation
\begin{equation}\label{Eq:1:0}
\pl_t\be(u)-\dvg\big(|Du|^{p-2}Du\big)\ni0\quad\text{ weakly in }E_T. 
\end{equation}
Here $E$ is an open set of $\rn$ with $N\ge1$, $E_T:=E\times(0,T]$ for some $T>0$,
and the enthalpy $\be(\cdot)$ is a maximal monotone graph in $\rr\times\rr$ defined by
\begin{equation}\label{Eq:beta}
\be(u)=\left\{
\begin{array}{cl}
u,\quad& u>0,\\[5pt] 
\left[-\nu,0\right],\quad& u=0,\\[5pt]
u-\nu,\quad& u<0,
\end{array}
\right.
\end{equation} 
for some positive constant $\nu$ reflecting the exchange of latent heat. 
The equation \eqref{Eq:1:0} is understood in the sense of differential inclusions,
which is to be made precise later.

Our {\bf main result} is that, when $\frac{2N}{N+1}<p<2$, locally bounded, weak solutions to \eqref{Eq:1:0} are locally continuous.
Moreover, we can quantify the modulus of continuity by
\begin{equation}\label{Eq:modulus}
(0,1)\ni r\mapsto \boldsymbol\om(r)=|\ln (cr)|^{-\sig}\quad\text{ for some }c,\,\sig>0.
\end{equation}
In fact, the same regularity holds true for more general diffusion part
 modeled on the $p$-Laplacian; see Theorem~\ref{Thm:1:1} for details.
The number $\frac{2N}{N+1}$ is referred to as a {\it critical} value
in the local regularity theory for the parabolic $p$-Laplace type equation,
and the given range of $p$ is often called {\it  singular, super-critical}; see \cite{DBGV-mono} for an account of the theory.

The local continuity of weak solutions to \eqref{Eq:1:0} is previously known only for $p\ge2$; 
see for instance \cite{Urbano-14, Caff-Evans-83, DB-82, Liao-Stefan, Urbano-00, Urbano-08}.
In particular, a modulus of type \eqref{Eq:modulus} has been achieved in \cite{Urbano-14} for all $p\ge2$,
which has been conjectured to be optimal as a structural property of weak solutions; see also \cite{Liao-Stefan}.
On the other hand, the case $p<2$ presents considerable difficulty. As far as we know, the only attempts
are made in \cite{Urbano-05, Li-20}. However, there is an oversight in \cite[Lemma~1]{Urbano-05},
which results in a gap in the proof of continuity; see \cite[p.~1382]{Li-20}. A stability result on the continuity issue
 as $p\uparrow2$ is presented in \cite{Li-20}.

Therefore our contribution to the existing literature is two-fold:
on the one hand, we confirm that locally bounded, weak solutions to \eqref{Eq:1:0} do have continuous representatives
when $\frac{2N}{N+1}<p<2$;
on the other hand, through more careful analysis,
 we explicitly quantify the logarithmic type modulus of continuity, which has been conjectured
to be optimal. The significance of continuity for weak solutions (temperatures) stems from their physical bearings;
an explicit and sharp modulus has further mathematical implications, cf.~\cite{Caff-Fried}.

Heuristically, the ``derivative" $\be'(0)$ is infinity and thus $\be(\cdot)$ is singular at $[u=0]$, while the $p$-Laplacian is singular
at $[|Du|=0]$. Hence the equation \eqref{Eq:1:0} carrys two types of singularities simultaneously
and the main difficulty lies in balancing them.
To give a glimpse at what is at stake here, suppose $u$ is a solution to  \eqref{Eq:1:0} and $0\le u\le 1$.
The goal is to show that the oscillation of $u$ is strictly less than $1$ within a smaller cylinder in a quantified manner.
The classical alternative approach (cf.~\cite{DB, Urbano-08}) unfolds along 
the smallness of the measure concentration of the set $[u\approx0]$.
If $|[u\approx0]|<c_o$ for some critical number $c_o$, then a De Giorgi type lemma asserts that
$u$ has pointwise concentration near $1$ in a smaller cylinder, that is,
 $u\ge\sig$ for some $\sig>0$. 
Thus the oscillation is controlled by $1-\sig$ in that cylinder. Otherwise, if $|[u\approx0]|\ge c_o$,
then the machinery of De Giorgi implies that $u$ has pointwise concentration near $0$ in a smaller cylinder, 
that is, $u\le 1-\sig$ for some $\sig>0$.
In either case, the oscillation is brought down by $\sig$ in a quantified smaller cylinder.
Iterating  this process and reducing the oscillation of $u$ along a nested family of cylinders will prove the continuity of $u$.

Nevertheless, due to the singularity of the equation \eqref{Eq:1:0}, we need to stretch or compress the cylinders
at each step, according to the oscillation itself over the nested cylinders; this is called the {\it method of intrinsic scaling}, 
cf.~\cite{DB,DBGV-mono, Urbano-08}. Moreover, propagation of measure information in the time direction,
connecting the two alternatives,
becomes quite delicate in this intrinsic scaling scenario.
As a result, we encounter the difficulty:
when we work near $u\approx0$ in the first alternative, the singularity of $\be(u)$ dominates that of the $p$-Laplacian
and a ``long", intrinsic cylinder is needed; 
when we work near $u\approx1$ in the second alternative, $\be(u)$ does not contribute any singularity,
only the singularity of the $p$-Laplacian
matters,  and a ``short", intrinsic cylinder is called for.
As such the two alternatives are potentially incompatible with each other, if the classical  approach
is not properly implemented, in view of the delicacy of intrinsic scaling.

Our new device to overcome this difficulty consists in exploiting 
a Harnack type inequality in the $C_{\loc}\big(0,T; L^1_{\loc}(E)\big)$ topology, for non-negative, weak super-solutions to the parabolic $p$-Laplacian (which is termed an $L^1$ Harnack inequality in Section~\ref{S:L1}),
in order to propagate the measure information along the time direction and thus to bridge the two alternatives.
Together with a tailored expansion of positivity (cf.~Section~\ref{S:expansion}), 
the measure information is then translated into pointwise estimates, 
yielding the reduction of oscillation. The idea of using an ameliorated expansion of positivity
to obtain a logarithmic type modulus of continuity for the Stefan problem stems from \cite{Liao-Stefan}.
The use of an $L^1$ Harnack inequality has been inspired by \cite{Vespri-ACV} with a different context.
Since there are considerable technicalities to follow,  
it will be helpful to refer to \cite[Section~1.3]{Liao-Stefan} to have an overview of the method.

\subsection{Statement of the  results}
%

In what follows, we consider the following
more general, parabolic equation
\begin{equation}\label{Eq:1:1}
\pl_t\be(u)-\dvg\bl{A}(x,t,u, Du) \ni 0\quad \text{ weakly in }\> E_T.
\end{equation}
Here $\be(\cdot)$ is defined in \eqref{Eq:beta}.   
The function $\bl{A}(x,t,u,\xi)\colon E_T\times\rr^{N+1}\to\rn$ is assumed to be
measurable with respect to $(x, t) \in E_T$ for all $(u,\xi)\in \rr\times\rn$,
and continuous with respect to $(u,\xi)$ for a.e.~$(x,t)\in E_T$.
Moreover, we assume the structure conditions
\begin{equation}  \label{Eq:1:2}
\left\{
\begin{array}{l}
\bl{A}(x,t,u,\xi)\cdot \xi\ge C_o|\xi|^p \\[5pt]
|\bl{A}(x,t,u,\xi)|\le C_1|\xi|^{p-1}%
\end{array}%
\right .\quad \text{ a.e.}\> (x,t)\in E_T,\, \forall\,u\in\rr,\,\forall\xi\in\rn,
\end{equation}
where $C_o$ and $C_1$ are given positive constants, and we take
\begin{equation}\label{Eq:p-range}
\tfrac{2N}{N+1}=:p_*<p<2.
\end{equation}
%
In the sequel, the set of parameters $\{\nu, p,N,C_o,C_1\}$ will be referred to as the (structural) data
and we will use $\gm$ as a generic positive constant depending on the data.

In addition, the 
principal part $\bl{A}$ is assumed to be monotone in the variable $\xi$ 
in the sense that
\begin{equation}\label{Eq:5:1:3}
\big(\bl{A}(x,t,u,\xi_1)-\bl{A}(x,t,u,\xi_2)\big)
\cdot(\xi_1-\xi_2)\ge0
\end{equation}
for all variables in the indicated domains, and Lipschitz continuous 
in the variable $u$, that is,
\begin{equation}\label{Eq:5:1:4}
\big|\bl{A}(x,t,u_1,\xi)-\bl{A}(x,t,u_2,\xi)\big|\le \Lm|u_1-u_2| (1+|\xi|^{p-1})
\end{equation}
for some given $\Lm>0$, and for the variables in the indicated domains.

Before proceeding, let us recall the parabolic $p$-Laplace type equation, which is pertinent to \eqref{Eq:1:1}:
\begin{equation}\label{Eq:p-Laplace}
u_t-\dvg\bl{A}(x,t,u, Du) = 0\quad \text{ weakly in }\> E_T,
\end{equation}
where the properties of $\bl{A}(x,t,u, \xi)$ are retained.


Let $\Gm:=\pl E_T-\overline{E}\times\{T\}$
be the parabolic boundary of $E_T$, and for a compact set $\mathcal{K}\subset E_T$
introduce the parabolic $p$-distance from $\mathcal{K}$ to $\Gm$ by
\begin{equation*}
	\begin{aligned}
		\dist_p(\mathcal{K};\,\Gm)&:=\inf_{\substack{(x,t)\in \mathcal{K}\\(y,s)\in\Gm}}
		\left\{|x-y|+|t-s|^{\frac1p}\right\}.
	\end{aligned}
\end{equation*}

The formal definition of weak solution to \eqref{Eq:1:1} will be postponed to Section~\ref{S:1:2}.
Now we proceed to present the regularity theorem.
\begin{theorem}\label{Thm:1:1}
Let $u$ be a bounded weak solution to \eqref{Eq:1:1} in $E_T$, 
 under the structure condition \eqref{Eq:1:2} for $p\in(p_*,2)$.  
Then $u$ is locally continuous in 
$E_T$.
More precisely, there is a modulus of continuity $\boldsymbol\om(\cdot)$,
determined by the data, $\dist_p(\mathcal{K};\,\Gm) $ and $\|u\|_{\infty,E_T}$, 
such that
	\begin{equation*}
	\big|u(x_1,t_1)-u(x_2,t_2)\big|
	\le
	\boldsymbol\om\!\left(|x_1-x_2|+|t_1-t_2|^{\frac1p}\right),
	\end{equation*}
for every pair of points $(x_1,t_1), (x_2,t_2)\in \mathcal{K}$.
If in addition, the structure conditions \eqref{Eq:5:1:3} -- \eqref{Eq:5:1:4}
are satisfied, then there exists $\sig\in(0,1)$ depending only on $p$ and $N$, such that
$$\boldsymbol\om(r)=C \Big(\ln \frac{\dist_p(\mathcal{K};\,\Gm)}{r}\Big)^{-\sig}\quad\text{ for all }r\in\big(0, \dist_p(\mathcal{K};\,\Gm)\big)$$ 
with some $C>0$
 depending on the data and $\|u\|_{\infty,E_T}$.
\end{theorem}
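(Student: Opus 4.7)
The plan is to reduce the essential oscillation of $u$ on a nested family of intrinsically scaled cylinders and then iterate. Two ingredients are crucial to cope with the double singularity of \eqref{Eq:1:1}: an $L^1$--Harnack type inequality, used to propagate measure information in the time direction across the singularity of $\be$, and a carefully tuned expansion of positivity that converts measure information into pointwise bounds with a quantitative constant degrading only like $|\ln\om|^{-1}$. A standard De Giorgi iteration would produce $\om_{n+1}\le\eta\om_n$ with $\eta$ independent of $\om_n$ and hence a H\"older modulus; the sharper logarithmic modulus \eqref{Eq:modulus} is recovered precisely because the admissible $\eta_n$ has the form $1-c|\ln\om_n|^{-1}$, and summing the resulting recursion produces $\om_n\lesssim|\ln(\rho_n/\rho_o)|^{-\sig}$.

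To set up, I would fix $(x_o,t_o)$ well inside $\mathcal{K}$, let $Q_o$ be a cylinder around $(x_o,t_o)$ of small radius $\rho_o$, and set $\mu^+:=\essup_{Q_o}u$, $\mu^-:=\essinf_{Q_o}u$, $\om:=\mu^+-\mu^-$. I would then distinguish two alternatives, depending on whether the level $\{u\approx 0\}$ lies in $[\mu^-,\mu^+]$. If $0\notin[\mu^-,\mu^+]$, then on $Q_o$ the map $\be(\cdot)$ is a smooth strictly increasing function of $u$, so \eqref{Eq:1:1} reduces to a standard singular $p$--Laplacian type equation with bounded coefficients; the intrinsic cylinder is then taken ``short'', of size $\rho_o^p\om^{2-p}$ in the time direction, and the reduction of oscillation follows from the singular theory in \cite{DBGV-mono}. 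If $0\in[\mu^-,\mu^+]$, I would instead work in a ``long'' intrinsic cylinder whose vertical size absorbs the jump $\nu$ of $\be$, and perform a De Giorgi dichotomy on the sub--level set $\{u-\mu^-\le\tfrac14\om\}$: either its relative measure is small on a time sub--cylinder, and a De Giorgi iteration yields $u\ge\mu^-+\sig\om$ pointwise, or it is large in measure on some time slice.

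The delicate case is the ``large measure'' one, because the intrinsic time scale adapted to $\be$ near zero is incompatible with the one demanded by the pure $p$--Laplacian reduction near $\mu^+$. To bridge them I would apply the $L^1$--Harnack inequality for non--negative weak super--solutions of \eqref{Eq:p-Laplace} from Section~\ref{S:L1} to the function $\mu^+-u$: an $L^1$--concentration on one time slice propagates forward with controlled loss, and therefore persists on any later slice within a suitable sub--cylinder. On that sub--cylinder, the expansion of positivity of Section~\ref{S:expansion}, tuned so that its exponent depends only logarithmically on the initial concentration, yields the pointwise bound $u\le\mu^+-\sig\om$ with $\sig\gtrsim|\ln\om|^{-1}$. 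Combined with the first alternative, this gives a recursion $\om_{n+1}\le(1-\sig_n)\om_n$, whose solution, after a discrete ODE argument driven by $\sum\sig_n\asymp\sum|\ln\om_n|^{-1}$, is precisely $\om_n\lesssim|\ln(\rho_n/\rho_o)|^{-\sig}$, namely \eqref{Eq:modulus}.

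The main obstacle I foresee is neither alternative in isolation but their compatibility: the intrinsic time scale dictated by $\be$ near zero grows like $\om^{-(p-1)}$, while the one dictated by the singular $p$--Laplacian away from zero grows like $\om^{2-p}$; both must be accommodated on the same cylinder in order that $L^1$--Harnack propagation and expansion of positivity yield constants degrading only logarithmically in $\om$. Tracking this degradation through every De Giorgi energy estimate, and verifying that the monotonicity \eqref{Eq:5:1:3} and the Lipschitz condition \eqref{Eq:5:1:4} do not introduce any algebraic loss when comparing $u$ with its truncations, is what ultimately pins down the exponent $\sig$ in the final modulus.
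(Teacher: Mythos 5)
Your skeleton---two alternatives, $L^1$-Harnack to propagate measure information in time, a tailored expansion of positivity, and an intrinsic iteration---matches the paper, but the central quantitative claim is wrong and would not deliver the modulus \eqref{Eq:modulus}. You assert that the expansion of positivity gives a pointwise gain $u\le\mu^+-\sigma\om$ with $\sigma\gtrsim|\ln\om|^{-1}$, so that the per-step reduction is $1-c|\ln\om_n|^{-1}$. No version of Lemma~\ref{Lm:expansion} supports this. Under the comparison-principle hypotheses \eqref{Eq:5:1:3}--\eqref{Eq:5:1:4}, the expansion yields $\eta=\eta_o\al$, \emph{linear} in the measure fraction $\al$; without them, only $\eta\sim\exp(-2^{\gamma/\al^{p+2}})$. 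After the $L^1$-Harnack step the available fraction is $\al\approx c\,\om^{(N+p)/\lm}$, a \emph{power} of $\om$, and the concentration level is $\approx c\,\om^{1+(N+p)/\lm}$, so the resulting pointwise bound is $v\ge \eta_o c^2\om^{1+q}$ with $q=2(N+p)/\lm$. The actual recursion is therefore $\om_{n+1}=(1-\eta\om_n^q)\om_n$---a power-law loss per step, giving $\om_n\sim n^{-\sigma}$ for any $\sigma<1/q$---not the super-polynomial decay $\om_n\sim e^{-\sqrt{cn}}$ that your claimed recursion $\om_{n+1}=(1-c|\ln\om_n|^{-1})\om_n$ would produce. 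The logarithmic modulus does not come from a logarithmic reduction factor; the paper extracts it from the interplay between the \emph{polynomial} decay of $\om_n$ in $n$ and the \emph{super-geometric} shrinkage $R_{n+1}\approx\widetilde{c}\,\om_n^{\widetilde q}R_n$ of the intrinsic radii: using the crude bound $\om_n\ge\om\,2^{-n}$ one gets $\ln(R/R_n)\gtrsim n^2$, hence $\om_n\lesssim(\ln(R/r))^{-\sigma/2}$. Your proposed recursion neither follows from the tools at hand nor, combined with geometric shrinkage of the radii, yields \eqref{Eq:modulus}.

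Two further imprecisions. First, applying $L^1$-Harnack to $\mu^+-u$ directly is not legitimate: when $u$ crosses zero the $\be$-term re-enters and $\mu^+-u$ need not be a super-solution of \eqref{Eq:p-Laplace}. The paper instead works with $v=\tfrac18\om-(u-k)_+$ where $k=\mu^+-\tfrac18\om$ and invokes Lemma~\ref{Lm:sub-solution}, which requires $k>0$; this is why the preliminary check \eqref{Eq:mu-pm-0}---that one of $\mu^\pm$ sits at least $\tfrac14\om$ from the origin---replaces your coarser dichotomy on whether $0\in[\mu^-,\mu^+]$, which does not guarantee $k>0$. Second, the structure conditions \eqref{Eq:5:1:3}--\eqref{Eq:5:1:4} are not needed to ``compare $u$ with its truncations''; they are exactly what validates the comparison principle used to prove the second, improved part of Lemma~\ref{Lm:expansion}, i.e.\ to upgrade $\eta\sim\exp(-2^{\gamma/\al^{p+2}})$ to $\eta\sim\al$.
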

\begin{remark}\upshape
The number $\sig$ can be quantified by $\frac{N(p-2)+p}{4(N+p)}$.
\end{remark}
\begin{remark}\upshape
Without the additional structural conditions \eqref{Eq:5:1:3} -- \eqref{Eq:5:1:4}, a modulus
of continuity can also be quantified, which is however far from being optimal, cf.~Proposition~\ref{Prop:int-reg}.
The conditions \eqref{Eq:5:1:3} -- \eqref{Eq:5:1:4} have been evoked to validate the comparison principle (cf.~\cite[Chapter~7, Corollary~1.1]{DBGV-mono}),
which is needed in the second part of Lemma~\ref{Lm:expansion}.
Although we think they are just technical assumptions, we do not know how to remove them for now.
On the other hand, they are natural assumptions in the existence theory,
in order to construct continuous weak solutions; see Section~\ref{S:approx}.
\end{remark}
\begin{remark}\upshape
Local boundedness of weak solutions is sufficient in the theorem.
The method also applies to equations with
 lower order terms. The maximal monotone graph $\be(\cdot)$
also admits general forms, as long as it has only one jump; see \cite{DBV-95} for multiple jumps. 
The changes due to these generalizations can be modeled on the arguments in \cite{DB-82}.
However we will not pursue generality in this direction. Instead, concentration will be made on the actual novelties.
\end{remark}

\subsection{Definition of solution}\label{S:1:2}
Local weak solutions to the parabolic $p$-Laplace type equation \eqref{Eq:p-Laplace}
are defined in the function space
\begin{equation}   \label{Eq:func-space-1}
	u\in C_{\loc}\big(0,T;L_{\loc}^2(E)\big)\cap L_{\loc}^p\big(0,T; W_{\loc}^{1,p}(E)\big),
\end{equation}
which assumes no {\it a priori} knowledge on the time derivative; see \cite[Chapter~II]{DB}.

In contrast to \eqref{Eq:func-space-1}, the notion of weak solution  to the Stefan problem \eqref{Eq:1:1}
 requires the
 time derivative to exist in the Sobolev sense. This is necessary to justify the calculations to follow. 
On the other hand, 
construction of  {\it continuous} weak solutions to \eqref{Eq:1:1} without time derivate
will be indicated in Section~\ref{S:approx}.

A function $u$
is termed a local, weak sub(super)-solution to \eqref{Eq:1:1} with the structure
condition \eqref{Eq:1:2}, if 
\begin{equation*}
	u\in W_{\loc}^{1,2}\big(0,T;L^2_{\loc}(E)\big)\cap L^p_{\loc}\big(0,T; W^{1,p}_{\loc}(E)\big)
\end{equation*}
and if
for every compact set $K\subset E$ and every sub-interval
$[t_1,t_2]\subset (0,T]$, there is a selection $v\subset\be(u)$, i.e.
\[
\left\{\big(z,v(z)\big): z\in E_T\right\}\subset \left\{\big(z,\be[u(z)]\big): z\in E_T\right\},
\]
 such that
\begin{equation*}
	\int_K v\z \,\dx\bigg|_{t_1}^{t_2}
	+
	\iint_{K\times(t_1,t_2)} \big[-v\pl_t\z+\bl{A}(x,t,u,Du)\cdot D\z\big]\dx\dt
	\le(\ge)0
\end{equation*}
for all non-negative test functions
\begin{equation}\label{Eq:function-space}
\z\in W^{1,2}_{\loc}\big(0,T;L^2(K)\big)\cap L^p_{\loc}\big(0,T;W_o^{1,p}(K)%
\big).
\end{equation}
Since $v\in L^{\infty}_{\loc} \big(0,T;L^2_{\loc}(E)\big)$,  all the above integrals 
are well-defined. It is also noteworthy that the above integral formulation itself does not involve
the time derivative of $u$.
On the other hand, it is possible to employ the time derivative  and write the integral formulation as
\begin{equation}\label{Eq:int-form}
\begin{aligned}
-\int_K&\nu(x,t)\chi_{[u\le0]}\z\,\dx\bigg|_{t_1}^{t_2}+\iint_{K\times(t_1,t_2)}\nu(x,t)\chi_{[u\le0]}\pl_t\z\,\dx\dt\\
&+\iint_{K\times(t_1,t_2)} \big[\pl_t u\z+\bl{A}(x,t,u,Du)\cdot D\z\big]\dx\dt
	\le(\ge)0,
\end{aligned}
\end{equation}
where $\z$ is as in \eqref{Eq:function-space}, 
$\chi$ is the characteristic function of the indicated set,
and the non-negative measurable function $\nu(x,t)$ is given by
\begin{equation*}
\nu(x,t):=\left\{
\begin{array}{cl}
\nu,&\quad (x,t)\in [u<0],\\[5pt]
-v(x,t),&\quad (x,t)\in [u=0].
\end{array}\right.
\end{equation*}

A function $u$ is a local weak solution
to \eqref{Eq:1:1}, 
if it is both a local weak sub-solution and a local weak super-solution.

\

\noi {\it Acknowledgement.} This research has been funded by 
the FWF--Project P31956--N32 “Doubly nonlinear evolution equations”.
\section{Preliminaries}\label{S:2}
In this section we collect some preparatory materials, including
an energy estimate (Proposition~\ref{Prop:2:1}), a De Giorgi type lemma (Lemma~\ref{Lm:DG:int}),
the expansion of positivity  (Lemma~\ref{Lm:expansion})
and an $L^1$ Harnack inequality (Lemma~\ref{Lm:L1}) for non-negative, weak super-solutions to the parabolic $p$-Laplacian.

Throughout the rest of this note, 
we will use the symbol $K_\rho(x_o)$ to denote the cube of side length $2\rho$
and center $x_o$, with faces parallel with the coordinate planes of $\rr^N$.
Moreover, we will use
\begin{equation*}
\left\{
\begin{aligned}
Q_\rho(\theta)&:=K_{\rho}(x_o)\times(t_o-\theta\rho^p,t_o),\\[5pt]
Q_{R,S}&:=K_R(x_o)\times (t_o-S,t_o),
\end{aligned}\right.
\end{equation*} 
to denote (backward) cylinders with the indicated positive parameters; 
we omit the vertex $(x_o,t_o)$
from the notations for simplicity.
\subsection{Energy estimates}
The energy estimate, concerning the truncated functions
$(u-k)_+:=\max\{u-k,0\}$ and $(u-k)_-:=\max\{k-u,0\}$, is the same as \cite[Proposition~2.1]{Liao-Stefan}.
Needless to say, it holds true for all $p>1$.
\begin{proposition}\label{Prop:2:1}
	Let $u$ be a  local weak sub(super)-solution to \eqref{Eq:1:1} with  \eqref{Eq:1:2} in $E_T$.
	There exists a constant $\gm (C_o,C_1,p)>0$, such that
 	for all cylinders $Q_{R,S}\subset E_T$, 
 	every $k\in\rr$, and every non-negative, piecewise smooth cutoff function
 	$\z$ vanishing on $\pl K_{R}(x_o)\times(t_o-S,t_o)$,  there holds
\begin{align*}
	\essup_{t_o-S<t<t_o}&\Big\{\int_{K_R(x_o)\times\{t\}}	
	\z^p (u-k)_\pm^2\,\dx +\Phi_{\pm}(k, t_o-S,t,\z)\Big\}\\
	&\quad+
	\iint_{Q_{R,S}}\z^p|D(u-k)_\pm|^p\,\dx\dt\\
	&\le
	\gm\iint_{Q_{R,S}}
		\Big[
		(u-k)^{p}_\pm|D\z|^p + (u-k)_\pm^2|\partial_t\z^p|
		\Big]
		\,\dx\dt\\
	&\quad
	+\int_{K_R(x_o)\times \{t_o-S\}} \z^p (u-k)_\pm^2\,\dx,
\end{align*}
where 
\begin{align*}
\Phi_{\pm}(k, t_o-S,t,\z)=&-\int_{K_R(x_o)\times\{\tau\}}\nu(x,\tau)\chi_{[u\le0]}[\pm(u-k)_\pm\z^p]\,\dx\Big|_{t_o-S}^t\\
&+\int_{t_o-S}^t\int_{K_R(x_o)}\nu(x,\tau)\chi_{[u\le0]}\pl_t[\pm(u-k)_\pm\z^p]\,\dx\d\tau
\end{align*}
and $\nu(x,t)$ is a selection out of $[0,\nu]$ for $u(x,t)=0$, and $\nu(x,t)=\nu$ for $u(x,t)<0$.
\end{proposition}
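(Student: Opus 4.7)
The plan is to test the integral formulation \eqref{Eq:int-form} with $\pm(u-k)_\pm\zeta^p$ and to redistribute the resulting terms into the energy quantities displayed in the statement. Thanks to the assumed regularity $u\in W^{1,2}_{\mathrm{loc}}(0,T;L^2_{\mathrm{loc}}(E))\cap L^p_{\mathrm{loc}}(0,T;W^{1,p}_{\mathrm{loc}}(E))$, the truncation $(u-k)_\pm$ inherits a weak time derivative and sufficient spatial regularity so that, after extending by zero past an arbitrary $t\in(t_o-S,t_o]$, the function $\pm(u-k)_\pm\zeta^p$ belongs to the admissible class \eqref{Eq:function-space}.

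The temporal and jump contributions come first. Substituting the test function into the first two summands of \eqref{Eq:int-form} produces exactly the bookkeeping quantity $\Phi_\pm(k,t_o-S,t,\zeta)$. The remaining time term rewrites, via the chain rule $\pm\pl_t u\cdot(u-k)_\pm = \tfrac{1}{2}\pl_t[(u-k)_\pm^2]$, as a total time derivative, which upon integration by parts yields the supremum-in-$t$ term $\tfrac{1}{2}\int_{K_R(x_o)}(u-k)_\pm^2\zeta^p\,\dx$ at $\tau=t$ on the left, together with the initial datum and the remainder $\tfrac{1}{2}\iint(u-k)_\pm^2\pl_t\zeta^p\,\dx\,\d\tau$ on the right.

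For the divergence term the product rule gives
\[
\bl{A}\cdot D\bigl[\pm(u-k)_\pm\zeta^p\bigr] = \pm \bl{A}\cdot D(u-k)_\pm\,\zeta^p \pm p(u-k)_\pm\zeta^{p-1}\bl{A}\cdot D\zeta.
\]
Since $\pm D(u-k)_\pm = Du\,\chi_{[\pm(u-k)>0]}$, the first structure condition in \eqref{Eq:1:2} forces the principal part to contribute the nonnegative quantity $C_o\iint|D(u-k)_\pm|^p\zeta^p$ on the left-hand side. The cross term is handled by the second structure condition together with Young's inequality with conjugate exponents $p/(p-1)$ and $p$:
\[
pC_1 (u-k)_\pm\zeta^{p-1}|Du|^{p-1}|D\zeta|\le \tfrac{C_o}{2}|D(u-k)_\pm|^p\zeta^p + \gm(C_o,C_1,p)(u-k)_\pm^p|D\zeta|^p,
\]
and the first summand is absorbed into the principal part. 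Taking the essential supremum over $t\in(t_o-S,t_o)$ on the left then delivers the claimed inequality.

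The main technical point to justify rigorously is the insertion of $\pm(u-k)_\pm\zeta^p$ into \eqref{Eq:int-form} despite the multivalued $\be$ and the merely Sobolev-in-time regularity of $u$. I would address this by first passing to a Steklov average of \eqref{Eq:int-form} in time, in which the selection $v\subset\be(u)$ and the auxiliary $\nu(x,\tau)$ remain well-defined; the bookkeeping $\Phi_\pm$ then emerges as the natural limit of the averaged jump contribution, following the scheme in \cite[Chapter~II]{DB}.
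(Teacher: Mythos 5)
Your proposal is correct and follows exactly the route one expects; the paper itself does not prove Proposition~\ref{Prop:2:1} but instead cites \cite[Proposition~2.1]{Liao-Stefan}, and the derivation there proceeds in precisely this way: test \eqref{Eq:int-form} over $K_R(x_o)\times(t_o-S,t)$ with $\pm(u-k)_\pm\z^p$ (in the sense you describe, with the sign flip accounting for the super-solution inequality), collect the two $\nu(x,t)\chi_{[u\le0]}$ terms verbatim into $\Phi_\pm$, rewrite $\pm\pl_t u\,(u-k)_\pm=\tfrac12\pl_t[(u-k)_\pm^2]$ and integrate by parts in time, and discharge the cross diffusion term via \eqref{Eq:1:2} and Young's inequality with exponents $p/(p-1)$ and $p$. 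All the algebraic checkpoints in your sketch -- $\pm D(u-k)_\pm=Du\,\chi_{[\pm(u-k)>0]}$, the absorption of $\tfrac{C_o}{2}\z^p|D(u-k)_\pm|^p$ into the left-hand side, and the appearance of $(u-k)_\pm^p|D\z|^p$ and $(u-k)_\pm^2|\pl_t\z^p|$ on the right -- are accurate. The Steklov-average remark at the end is a sensible way to make the temporal integration by parts rigorous, though since the paper assumes $u\in W^{1,2}_{\loc}(0,T;L^2_{\loc}(E))$, the time derivative already exists in the Sobolev sense and the manipulations can be justified directly once one observes that $(u-k)_\pm$ is a Lipschitz image of $u$ and hence also lies in this class. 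In short: same proof, correctly executed.
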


\subsection{A De Giorgi type lemma}
For  a cylinder $Q_\rho=K_{\rho}(x_o)\times(t_o-\rho^{p},t_o)\subset E_T$ with $\rho\in(0,1)$,
we introduce the numbers $\mu^{\pm}$ and $\om$ satisfying
\begin{equation*}
	\mu^+\ge\essup_{Q_\rho} u,
	\quad 
	\mu^-\le\essinf_{Q_\rho} u,
	\quad
	\om\ge\mu^+ - \mu^-.
\end{equation*}
Let $\theta\in(0,1)$ be  a parameter to be determined.
The cylinder $Q_\rho(\theta)$ is coaxial with $Q_\rho$ and with the same vertex $(x_o,t_o)$;
moreover, we observe that $Q_\rho(\theta) \subset Q_\rho$.
Then we have the following lemma valid for all $p>1$; we refer to \cite[Lemma~2.1]{Liao-Stefan} for a proof.
\begin{lemma}\label{Lm:DG:int}
Let $u$ be a  local weak super-solution to \eqref{Eq:1:1} with \eqref{Eq:1:2} in $E_T$. For $\xi\in(0,1)$,
set $\theta=(\xi\om)^{2-p}$ and assume that $Q_{\rho}(\theta)\subset Q_\rho$. 
There exists a positive constant $c_o$ depending only on the data, such that if
\[
|[u\le\mu^-+\xi\om]\cap Q_{\rho}(\theta)|\le c_o(\xi\om)^{\frac{N+p}p}|Q_{\rho}(\theta)|,
\]
then
\[
u\ge \mu^-+\tfrac12\xi\om\quad\text{ a.e. in }Q_{\frac12\rho}(\theta).
\]
\end{lemma}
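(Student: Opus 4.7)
The plan is a classical De Giorgi iteration on shrinking cylinders, tuned to the Stefan jump through the choice of cylinder geometry. First, I would set
\[
k_n = \mu^- + \tfrac12\xi\om + \tfrac{\xi\om}{2^{n+1}}, \qquad \rho_n = \tfrac\rho2 + \tfrac{\rho}{2^{n+1}},
\]
and $Q_n := K_{\rho_n}(x_o)\times(t_o-\theta\rho_n^p,t_o)$, together with smooth cutoffs $\zeta_n$ that equal $1$ on $Q_{n+1}$, vanish on the parabolic boundary of $Q_n$, and satisfy $|D\zeta_n|\le 2^{n+2}/\rho$ and $|\pl_t\zeta_n^p|\le 2^{p(n+2)}/(\theta\rho^p)$. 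Writing $A_n:=[u<k_n]\cap Q_n$ and $Y_n:=|A_n|/|Q_n|$, the goal is to force $Y_n\to 0$ under the lemma's hypothesis.

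Next, I would insert $(u-k_n)_-$ and $\zeta_n$ into the energy estimate of Proposition~\ref{Prop:2:1}. Because $(u-k_n)_-\le\xi\om$ on $A_n$ and $\theta=(\xi\om)^{2-p}$, the two $p$-Laplace data terms on the right balance into $\gm(\xi\om)^p 2^{pn}\rho^{-p}|A_n|$. The Stefan term $\Phi_-$ is handled by observing that its terminal-slice contribution $\int\nu\chi_{[u\le 0]}(u-k_n)_-\zeta_n^p$ carries the favorable sign, that the initial-slice contribution vanishes by the choice of $\zeta_n$, and that the remaining $\pl_t\zeta_n^p$ piece is dominated in absolute value by $\gm\nu(\xi\om)^{p-1}2^{pn}\rho^{-p}|A_n|$, again thanks to $\theta=(\xi\om)^{2-p}$. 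Since $\nu$ is a fixed data constant and $\xi\om<1$, this Stefan contribution dominates the pure $(\xi\om)^p$ bound, and the effective energy inequality takes the form
\[
\essup_{t_o-\theta\rho_n^p<t<t_o}\int_{K_{\rho_n}}\zeta_n^p(u-k_n)_-^2\,\dx + \iint_{Q_n}\zeta_n^p|D(u-k_n)_-|^p\,\dx\dt \le \gm\,\frac{2^{pn}}{\rho^p}\,\nu(\xi\om)^{p-1}|A_n|.
\]

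Applying the parabolic Sobolev inequality to $(u-k_n)_-\zeta_n$ (interpolating between $L^\infty_t L^2_x$ and $L^p_t W^{1,p}_x$), using $(u-k_n)_-\ge\xi\om/2^{n+2}$ on $A_{n+1}$, and normalizing by $|Q_n|\asymp\rho^{N+p}(\xi\om)^{2-p}$, a bookkeeping of the $\xi\om$-powers produces the recursion
\[
Y_{n+1}\le \gm\,b^n\,(\xi\om)^{-(N+p)/N}\,Y_n^{1+p/N}
\]
with $\gm,b$ depending only on the data. The classical fast-geometric-convergence lemma then forces $Y_n\to 0$ whenever $Y_0\le c_o(\xi\om)^{(N+p)/p}$ for a universal $c_o=c_o(\gm,b)$. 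This is precisely the hypothesis, and the conclusion $u\ge\mu^-+\tfrac12\xi\om$ a.e.\ on $Q_{\rho/2}(\theta)$ follows.

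The principal obstacle is the rigorous control of $\Phi_-$: the intrinsic time-scale $\theta=(\xi\om)^{2-p}$ has been chosen to absorb the $p$-Laplacian's singularity, but it simultaneously amplifies the Stefan jump through the factor $|\pl_t\zeta_n^p|\sim\theta^{-1}\rho^{-p}$. This amplification is exactly what forces the extra factor $(\xi\om)^{(N+p)/p}$ into the smallness condition on $|A_0|$, in contrast to the corresponding lemma for the pure parabolic $p$-Laplacian. Making the $\pl_t u$-part of $\Phi_-$ rigorous relies on the Sobolev time derivative assumed in our notion of solution; an appropriate Steklov-type regularization of $u$ would suffice, and the bound above is obtained after passing to the limit.
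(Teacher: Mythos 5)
The paper does not reproduce the proof of Lemma~\ref{Lm:DG:int}, deferring instead to \cite[Lemma~2.1]{Liao-Stefan}; your proposal reconstructs what is essentially the standard argument, and I believe it is substantively correct. You have the right geometry (cylinders of intrinsic height $\theta\rho_n^p$ with $\theta=(\xi\om)^{2-p}$ and levels $k_n$ decreasing to $\mu^-+\tfrac12\xi\om$), the right source of the extra smallness factor (the Stefan term contributes $(\xi\om)^{p-1}$ versus $(\xi\om)^p$ from the $p$-Laplace data, and since $\xi\om<1$ the former dominates), and the power counting in the recursion $Y_{n+1}\le\gm b^n(\xi\om)^{-(N+p)/N}Y_n^{1+p/N}$ checks out: feeding $C=\gm(\xi\om)^{-(N+p)/N}$, $\al=p/N$ into the fast-geometric-convergence criterion $Y_0\le C^{-1/\al}b^{-1/\al^2}$ gives precisely $Y_0\le c_o(\xi\om)^{(N+p)/p}$ with $c_o=\gm^{-N/p}b^{-N^2/p^2}$, which is the hypothesis of the lemma.

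One point in your treatment of $\Phi_-$ is stated too loosely and should be filled in. After the initial-slice contribution is discarded, the integral part of $\Phi_-$ is $-\iint\nu\chi_{[u\le0]}\pl_t[(u-k_n)_-\z_n^p]$, which splits into a $(u-k_n)_-\pl_t\z_n^p$ piece (the one you bound) \emph{and} a $\chi_{[u\le0]}\pl_t(u-k_n)_-\z_n^p$ piece that you do not address quantitatively, deferring it to Steklov regularization. That second piece is not negligible: on $[u\le0]$ (and $k_n>0$) one has $\pl_t(u-k_n)_-=-\pl_t u=\pl_t(u)_-$ a.e., so a further integration by parts in time produces a boundary term $-\nu\int(u)_-\z_n^p$ at the terminal slice plus another $\nu\iint(u)_-\pl_t\z_n^p$ term. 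The boundary term appears with the wrong sign in isolation, but combined with your terminal-slice term $\int\nu\chi_{[u\le0]}(u-k_n)_-\z_n^p$ the net contribution is $\nu k_n\int\chi_{[u\le0]}\z_n^p\ge0$ (for $u<0$; the $[u=0]$ part lies in $[0,\nu k_n]$), so the sign is in fact favorable. The extra $\nu\iint(u)_-\pl_t\z_n^p$ integral is then controlled just like the one you kept, using $(u)_-\le(u-k_n)_-\le\xi\om$ on the relevant set. With this accounting the claimed bound $\gm\nu(\xi\om)^{p-1}2^{pn}\rho^{-p}|A_n|$ is justified, and the rest of your argument goes through. (When $k_n\le 0$ one checks similarly that $\Phi_-$ contributes nothing, since then $\chi_{[u\le0]}=1$ on the support of $(u-k_n)_-\z_n^p$ and the two terms in $\Phi_-$ cancel identically.)
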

\begin{remark}\upshape
The singularity of $\be(\cdot)$ is reflected by $(\xi\om)^{\frac{N+p}p}$ in the measure condition.
An essential task in estimating the modulus of continuity lies in carefully tracing its effect at every step
and quantifying its role in the reduction of oscillation.
\end{remark}
\subsection{Expansion of Positivity}\label{S:expansion}
The following lemma concerns the expansion of positivity for non-negative, super-solutions to the parabolic $p$-Laplacian.
It consists of two parts: the first part actually holds for all $p\in(1,2)$, under the mere structural condition \eqref{Eq:1:2};
the second part provides an amelioration, yet under the additional conditions \eqref{Eq:p-range} -- \eqref{Eq:5:1:4}.
Although we think the second part should hold true under the same assumptions as the first part,
we do not have a proof for the moment. In any case, the current version suffices for our purpose.
When we speak of the {\it data} in Sections~\ref{S:expansion} -- \ref{S:L1}, it is referred to $\{p, N, C_o, C_1\}$
\begin{lemma}\label{Lm:expansion}
Let $v$ be a non-negative,  local, weak super-solution to \eqref{Eq:p-Laplace} with 
 \eqref{Eq:1:2} for $p\in(1,2)$ in $E_T$.
Suppose that for some $M,\,\al\in(0,1)$ there holds
\[
|[v(\cdot, t_o)>M]\cap K_{\rho}(x_o) |\ge\al |K_{\rho}|.
\]
Then there exist positive constants $\dl$ and $\eta$ depending only on the data and $\al$, 
such that
\[
v(\cdot, t)\ge\eta M\quad\text{ a.e. in } K_{ \rho}(x_o).
\]
for all times
\[
t_o+\tfrac12\dl M^{2-p}\rho^p\le t\le t_o+\dl M^{2-p}\rho^p,
\]
provided $\mathcal{Q}:=K_{8\rho}\times(t_o,t_o+\dl M^{2-p}\rho^p)$ is included in $E_T$.
Moreover, the constants $\dl$ and $\eta$ can be traced by
\[
\dl=\dl_o \al^{p+1},\quad\eta=\eta_o \exp \Big\{-2^{\gm/\al^{p+2}}\Big\}  
\]
for some  $\dl_o,\,\eta_o\in(0,1)$ and $\gm>1$ depending only on the data.

If in addition, 
the structure conditions \eqref{Eq:5:1:3} -- \eqref{Eq:5:1:4} are satisfied and $p\in(p_*,2)$,
then the dependence of $\eta$ can be improved to be
\[
\eta=\eta_o\al,
\]
for some $\eta_o\in(0,1)$ depending only on the data. 
\end{lemma}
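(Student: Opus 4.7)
The plan is to handle the two parts of the lemma separately, since they require substantially different techniques: the first part is an iterative De~Giorgi/shrinking argument, while the second part invokes comparison with a Barenblatt-type fundamental solution.

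For the first part, I would implement the standard three-step expansion-of-positivity scheme adapted to the singular parabolic $p$-Laplacian. First, I would establish a \emph{propagation of positivity in time}: using the logarithmic test function $\z^p[\ln(M/(v+\varepsilon M))]_+$ (or a truncated variant) in the weak formulation and exploiting the energy estimate of Proposition~\ref{Prop:2:1} (applied to the equation \eqref{Eq:p-Laplace} without the $\be$ contribution), I would show that on a time interval of length comparable to $\al M^{2-p}\rho^p$ one still retains $|[v(\cdot,t)>\eta_1 M]\cap K_\rho|\ge \tfrac12\al|K_\rho|$ for some $\eta_1=\eta_1(\al)$. This fixes the role of $\dl\sim \dl_o\al^{p+1}$. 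Second, I would apply a \emph{shrinking lemma}: integrating the energy inequality between the level sets $\eta^{k+1}M$ and $\eta^{k}M$ and invoking a De~Giorgi--Poincaré inequality in the manner of \cite{DBGV-mono}, one shows geometric decay of the measure of $[v<\eta^{k+1}M]\cap K_{2\rho}$, so that after $k_\star\sim |\ln c_o|/\al$ iterations one reaches the threshold required by Lemma~\ref{Lm:DG:int} applied to $v$ (on a cylinder whose intrinsic scaling parameter $\theta$ is $(\eta^{k_\star}M)^{2-p}$). The De~Giorgi lemma then yields pointwise positivity $v\ge \tfrac12\eta^{k_\star}M$, and a direct count of the iterations produces the doubly exponential dependence $\eta=\eta_o \exp\{-2^{\gm/\al^{p+2}}\}$.

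For the second (ameliorated) part, I would exploit the explicit Barenblatt fundamental solution
\[
\mathcal{B}_M(x,t)=\Big[\big(1-b|x|^{p/(p-1)}t^{-1/(N(p-2)+p)}\big)_+\Big]^{(p-1)/(2-p)} t^{-N/(N(p-2)+p)}
\]
(up to normalization), which is well-defined precisely in the range $p>p_*$ because the exponent $N(p-2)+p$ is positive. The construction is standard: pick a point $\bar x$ of density of $[v(\cdot,t_o)>M]\cap K_\rho$, rescale a shifted Barenblatt profile so that its support at time $t_o$ lies inside $\{v(\cdot,t_o)>M\}$ and its height at the top is $\sim \al M$, and use the comparison principle—which requires the monotonicity \eqref{Eq:5:1:3} and Lipschitz continuity \eqref{Eq:5:1:4} of $\bl A$—to obtain $v\ge \mathcal{B}_M$ in the relevant cylinder. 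Evaluating $\mathcal{B}_M$ at the target time gives the linear bound $v\ge \eta_o\al M$.

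The main obstacle is the second part: making the Barenblatt comparison rigorous in our setting requires choosing the profile parameters (mass, support, temporal shift) consistently with the intrinsic time scale $\dl M^{2-p}\rho^p$ inherited from the first part, and ensuring the support at the final time still covers the whole cube $K_\rho(x_o)$ while the height remains of order $\al M$; one must also check that the Barenblatt profile can be taken as a strict \emph{sub}-solution of \eqref{Eq:p-Laplace} under the given structure conditions, which is where \eqref{Eq:5:1:3}--\eqref{Eq:5:1:4} and the restriction $p>p_*$ are truly used. The first part is routine once the logarithmic estimate is arranged correctly, the only bookkeeping being the dependence of $\dl$ and $\eta$ on $\al$ extracted from the iteration counts.
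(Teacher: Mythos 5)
Your first part is a reasonable sketch of the standard expansion-of-positivity argument (time propagation $+$ measure shrinking $+$ De Giorgi) and coincides with the result the paper merely cites from DiBenedetto--Gianazza--Vespri (Chapter~4, Proposition~5.1 and Remark~5.1), so there is nothing more to compare there.

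For the second part your route is genuinely different from the paper's, and as written it has two gaps. First, you propose to rescale a Barenblatt profile so that its support at time $t_o$ lies inside $\{v(\cdot,t_o)>M\}$; but the hypothesis only gives $|[v(\cdot,t_o)>M]\cap K_{\rho}|\ge\al|K_{\rho}|$, and such a set can be a fat Cantor set with empty interior, so it contains no ball and the initial-time ordering $v(\cdot,t_o)\ge\mathcal B_M(\cdot,t_o)$ cannot be arranged by this device. Second, for general $\bl A$ obeying only \eqref{Eq:1:2}, \eqref{Eq:5:1:3}, \eqref{Eq:5:1:4}, the Barenblatt function is an exact solution only of the prototype $p$-Laplacian, not a sub-solution of \eqref{Eq:p-Laplace}, so comparison against it is not licensed by the comparison principle invoked in the paper. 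The paper sidesteps both problems: it introduces the \emph{continuous} solution $w$ of a Cauchy--Dirichlet problem on $K_{8\rho}\times(0,\infty)$ with data $v(\cdot,t_o)\chi_{K_\rho}$ (hence $w\le v$ by comparison, since $v\ge0$ on the lateral boundary and $v\ge w$ initially); it propagates the measure information in time via the measure-shrinking lemma of DiBenedetto--Gianazza--Vespri, Chapter~3, Lemma~1.1; it then uses \emph{continuity of $w$} to extract a single point $x_1\in K_\rho$ with $w(x_1,t_1)\ge\varep M$; finally it applies the intrinsic Harnack inequality to $w$ at $(x_1,t_1)$ --- this is where $p>p_*$ and the general structure conditions enter --- and transfers the resulting lower bound back to $v$ by comparison. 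The essential step you are missing is the conversion of measure information into a pointwise value \emph{before} any Harnack-type or Barenblatt-type estimate can be invoked; the auxiliary continuous solution $w$ is precisely the device that makes this conversion possible. The intrinsic Harnack step then plays the role you intended for the Barenblatt comparison, but formulated for the general operator rather than the prototype one.
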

\begin{proof}
The first part can be retrieved from Proposition~5.1 and Remark~5.1 in \cite[Chapter~4]{DBGV-mono}.
Thus we will only deal with the improvement of the dependence of $\eta$ under the additional
assumption 
that the structure conditions \eqref{Eq:5:1:3} -- \eqref{Eq:5:1:4} are verified.

To this end, let $(x_o,t_o)=(0,0)$ and set $Q=K_{8\rho}\times(0,\infty)$. 
Let $w$ be the unique, continuous, non-negative, weak solution to the following Cauchy-Dirichlet problem:
\begin{equation*}
\left\{
\begin{aligned}
&w_t-\dvg \bl{A}(x,t,w,Dw)=0\quad\text{ weakly in } Q,\\
&w(\cdot,t)\Big|_{\pl K_{8\rho}}=0\quad\text{ a.e. }t\in(0,\infty),\\
&w(\cdot,0)=v(\cdot,0)\chi_{K_{\rho}}.
\end{aligned}
\right.
\end{equation*}
Then, according to the given measure information of $v$, there holds 
\begin{equation}\label{Eq:measure-w}
|[w(\cdot,0)>M]\cap K_{\rho}|\ge\al |K_{\rho}|.
\end{equation}
Starting from \eqref{Eq:measure-w}, an application of \cite[Chapter~3, Lemma~1.1]{DBGV-mono} yields that there exist
\[
\varep=\tfrac18\al\quad\text{and}\quad\dl=\dl_o\al^{p+1},
\]
for some $\dl_o\in(0,1)$ depending only on the data,
such that
\[
|[w(\cdot,t)>\varep M]\cap K_{\rho}|\ge\tfrac12\al |K_{\rho}|,
\]
for all times
\[
0<t\le\dl M^{2-p}\rho^p=:t_1.
\]
Let us focus on the time level $t_1$. 
By the above measure information at the time level $t_1$
and the continuity of the function $x\mapsto w(x,t_1)$, there must be some $x_1\in K_{\rho}$
satisfying $w(x_1,t_1)\ge\varep M$.

Next, we may apply the intrinsic Harnack inequality in \cite[Chapter~7, Theorem~2.2]{DBGV-mono} at $w(x_1,t_1)$,
which requires that $p\in(p_*,2)$.
In this way, there exist constants $a,\,\eta_o\in(0,1)$ depending only on the data, such that
\[
\inf_{(x_1,t_1)+Q_{2\rho}(\theta)}w\ge\eta_o\varep M\quad\text{ where }\theta=a(\varep M)^{2-p}.
\]
Note that we can choose $a$ to be smaller and to depend on $\al$, as long as it is verified that
$a(\varep M)^{2-p}(8\rho)^p\le t_1$, that is, $a\le 8^{2-2p}\dl_o\al^{2p-1}$.
Therefore, recalling that $x_1\in K_\rho$, by the comparison principle (cf.~\cite[Chapter~7, Corollary~1.1]{DBGV-mono}) we have
\[
\inf_{K_{\rho}} v(\cdot, t)\ge \eta_o \varep M\equiv \tfrac18\eta_o\al M
\]
for all times
\[
t_1-a(\tfrac18\al M)^{2-p}(2\rho)^p\le t\le t_1.
\]
Recalling the definition of $t_1$ and properly adjusting $\{a, \dl_o, \eta_o\}$,
we can conclude.
\end{proof}
\subsection{An $L^1$ Harnack inequality}\label{S:L1}
The following can be retrieved from Proposition~A.1.2 in \cite[Appendix~A]{DBGV-mono}.
\begin{lemma}\label{Lm:L1} 
Suppose $v$ is a non-negative, local,
weak super-solution to \eqref{Eq:p-Laplace} with \eqref{Eq:1:2} in $E_T$ for $p\in(1,2)$.  
There exists a positive constant 
$\gm$ depending only on the data , such that 
for any cylinder $[s,t]\times K_{2\rho}(y)\subset E_T$,
\begin{equation*}
\sup_{s<\tau<t}\int_{K_\rho(y)} v(x,\tau)\,\dx\le\gm
\int_{K_{2\rho}(y)}v(x,t)\,\dx
+\gm\Big(\frac{t-s}{\rho^\lm}\Big)^{\frac1{2-p}}
\end{equation*}
where 
\begin{equation*}
\lm:=N(p-2)+p.
\end{equation*}
\end{lemma}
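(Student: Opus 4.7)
The approach is to test the weak super-solution formulation of \eqref{Eq:p-Laplace} against a time-independent spatial cutoff and to exploit the singular structure $p-1<1$ via Young's inequality to close the estimate.

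After translating so that $y=0$, fix a standard cutoff $\zeta\in C_c^\infty(K_{2\rho})$ with $0\le\zeta\le 1$, $\zeta\equiv 1$ on $K_\rho$, and $|D\zeta|\le\gm/\rho$, together with a sufficiently large exponent $q>p/(p-1)$. Since $v$ solves the plain $p$-Laplace equation \eqref{Eq:p-Laplace} (so the jump term $\nu(x,t)\chi_{[u\le 0]}$ is absent here) and $\phi(x)=\zeta(x)^q$ is time-independent, the super-solution formulation on $K_{2\rho}\times(\tau,t)$ reduces, for any $s<\tau<t$, to
\[
\int_{K_{2\rho}} v(x,\tau)\zeta^q\,\dx
\le \int_{K_{2\rho}} v(x,t)\zeta^q\,\dx
+ q\int_\tau^t\!\!\int_{K_{2\rho}} \bl{A}(x,\sigma,v,Dv)\cdot\zeta^{q-1}D\zeta\,\dx\,\d\sigma.
\]
The growth bound $|\bl{A}|\le C_1|Dv|^{p-1}$ then reduces the task to controlling the flux integral $\iint |Dv|^{p-1}\zeta^{q-1}|D\zeta|$.

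Because $p\in(1,2)$, the pair $\bigl(\tfrac{1}{p-1},\tfrac{1}{2-p}\bigr)$ is H\"older conjugate, and Young's inequality yields, for every $\varepsilon>0$ and a suitable splitting $\alpha+\beta=q-1$ of the cutoff powers,
\[
|Dv|^{p-1}\zeta^{q-1}|D\zeta| \le \varepsilon\,|Dv|\,\zeta^{\alpha/(p-1)} + C(\varepsilon)\,\zeta^{\beta/(2-p)}|D\zeta|^{1/(2-p)}.
\]
The second term is a purely geometric quantity whose space-time integral contributes the correct $\rho$-factor; the first, linear-gradient term is absorbed back into $\sup_\tau\!\int v(\tau)\zeta^q$ by a Caccioppoli estimate for non-negative super-solutions combined with H\"older's inequality. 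A self-improving iteration along a telescoping family of nested cubes $\rho_n\downarrow\rho$, in the spirit of the Moser iteration, sharpens the temporal scaling and produces the claimed bound
\[
\sup_{s<\tau<t}\int_{K_\rho} v(x,\tau)\,\dx
\le \gm\int_{K_{2\rho}} v(x,t)\,\dx + \gm\Bigl(\tfrac{t-s}{\rho^\lm}\Bigr)^{1/(2-p)},
\]
with $\lm=N(p-2)+p$.

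\textbf{Expected main obstacle.} The principal difficulty lies in the bookkeeping of Young's inequality: a single one-shot application produces the wrong temporal exponent, and only the self-improving iteration along the shrinking family of cubes recovers the scale-invariant factor $(t-s)^{1/(2-p)}\rho^{-\lm/(2-p)}$. This factor is precisely the one dictated by the intrinsic parabolic scaling $v\sim(t-s)^{1/(2-p)}/\rho^{p/(2-p)}$ of singular $p$-Laplacian super-solutions, and its emergence hinges decisively on the conjugate pair $\bigl(\tfrac{1}{p-1},\tfrac{1}{2-p}\bigr)$ being available only in the range $p\in(1,2)$.
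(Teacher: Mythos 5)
The paper does not prove this lemma; it cites Proposition~A.1.2 of the DiBenedetto--Gianazza--Vespri monograph, so the comparison must be with that proof. Your opening move is correct: testing the super-solution inequality with a time-independent cutoff $\zeta^q$ does reduce matters to estimating the flux $\iint |Dv|^{p-1}\zeta^{q-1}|D\zeta|$. However, the Young splitting you propose is not the one that closes the argument, and as stated it has two genuine gaps.

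First, the ``geometric'' term produced by your Young inequality, $\iint \zeta^{\beta/(2-p)}|D\zeta|^{1/(2-p)}$, integrates to a quantity of order $(t-s)\,\rho^{N-1/(2-p)}$. The target is $(t-s)^{1/(2-p)}\rho^{N-p/(2-p)}$. Both the temporal power and the spatial power disagree, and the discrepancy is dimensional: under the intrinsic scaling $[\text{time}]\sim[\text{length}]^p[v]^{2-p}$, your term does not carry the dimensions of $\int v\,\dx$, whereas the target does. An iteration over nested cubes $\rho_n\downarrow\rho$ only re-distributes multiplicative constants and can absorb an $\varepsilon$-fraction of the good quantity; it cannot change the exponent on $(t-s)$. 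Second, the ``linear-gradient'' term $\varepsilon\iint |Dv|\,\zeta^{\alpha/(p-1)}$ is not absorbable in the way you describe. The standard Caccioppoli estimate controls $\sup_\tau\int (v-k)_\pm^2\zeta^p$ and $\iint |D(v-k)_\pm|^p\zeta^p$, so applying H\"older to get from $\iint|Dv|$ to those quantities re-introduces $v$ to a power $\ge 1$ and does not feed back into the $L^1$ quantity $\sup_\tau\int v\,\zeta^q$ that you are trying to close on.

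The missing idea is the \emph{logarithmic} Caccioppoli estimate, obtained by testing the super-solution inequality with $(v+k)^{-\varepsilon}\zeta^q$ for a small $\varepsilon\in(0,1)$ and a free constant $k>0$. This controls $\iint\frac{|Dv|^p\zeta^q}{(v+k)^{1+\varepsilon}}$ by $\sup_\tau\int(v+k)^{1-\varepsilon}\zeta^q$ plus a geometric term. One then estimates the flux not by Young's inequality with the pair $\bigl(\tfrac{1}{p-1},\tfrac{1}{2-p}\bigr)$, but by H\"older with exponents $\bigl(\tfrac{p}{p-1},p\bigr)$ after inserting the weight $(v+k)^{\pm(1+\varepsilon)(p-1)/p}$:
\begin{equation*}
\iint |Dv|^{p-1}\zeta^{q-1}|D\zeta|
\le
\Big(\iint \frac{|Dv|^p\zeta^q}{(v+k)^{1+\varepsilon}}\Big)^{\frac{p-1}{p}}
\Big(\iint (v+k)^{(1+\varepsilon)(p-1)}\zeta^{q-p}|D\zeta|^p\Big)^{\frac1p}.
\end{equation*}
The first factor is the logarithmic Caccioppoli quantity; the second is geometric. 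The free parameter $k$ is then chosen proportional to $\bigl(\tfrac{t-s}{\rho^\lm}\bigr)^{\frac{1}{2-p}}/\rho^N$, and it is precisely this optimization — not the cube iteration — that produces the exponent $1/(2-p)$ on $(t-s)$. The nested-cube iteration that you invoke is indeed part of the genuine proof, but only to absorb the resulting power $(p-1)/p<1$ of $\sup_\tau\int v\,\zeta^q$ back into the left-hand side; it is not what fixes the temporal scaling. Without the $(v+k)^{-1-\varepsilon}$ weight and the balancing choice of $k$, your estimate cannot reach the claimed bound.
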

\begin{remark}\upshape
Lemma~\ref{Lm:L1} holds true for all $p\in(1,2)$ and the conditions \eqref{Eq:5:1:3} -- \eqref{Eq:5:1:4} are not needed.
The super-critical range \eqref{Eq:p-range} of $p$ is actually equivalent to $\lm\in(0,p)$.
 Hence Lemma~\ref{Lm:L1} holds true irrespective of the signs of $\lm$.
 However, the constant $\gm=\gm(p)\to\infty$ either as $p\uparrow2$ or as $p\downarrow1$. 
\end{remark}
\subsection{An auxiliary lemma}
The next lemma asserts that if we truncate a sub-solution to the Stefan problem \eqref{Eq:1:1} by a positive number,
the resultant function is a sub-solution to the parabolic $p$-Laplacian \eqref{Eq:p-Laplace}.
This actually holds for all $p>1$.
\begin{lemma}\label{Lm:sub-solution}
Let $u$ be a  local weak sub-solution to \eqref{Eq:1:1} with  \eqref{Eq:1:2} in $E_T$. 
Then for any $k>0$, the truncation $(u-k)_+$ is a local weak sub-solution to \eqref{Eq:p-Laplace} with 
\eqref{Eq:1:2} in $E_T$.
\end{lemma}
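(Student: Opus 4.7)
The plan is to rely on the weak formulation \eqref{Eq:int-form}, which is available because a sub-solution to \eqref{Eq:1:1} is assumed to have a Sobolev time derivative. I would test that inequality against $\zeta\phi_\varep(u)$, where $\zeta$ is a non-negative test function as in \eqref{Eq:function-space}, and $\phi_\varep\colon\rr\to[0,1]$ is a smooth, non-decreasing cutoff with $\phi_\varep(s)=0$ for $s\le k$ and $\phi_\varep(s)=1$ for $s\ge k+\varep$. Since $u$ lies in the space \eqref{Eq:function-space} and $\phi_\varep$ is bounded and Lipschitz, the product $\zeta\phi_\varep(u)$ is an admissible test function.

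Because $k>0$, the supports of $\phi_\varep(u)$ and $\phi_\varep'(u)$ are contained in $[u>k]$, which is disjoint from $[u\le0]$. Hence the two integrands involving $\chi_{[u\le0]}$ in \eqref{Eq:int-form} vanish identically once $\zeta$ is replaced by $\zeta\phi_\varep(u)$. Setting
\[
\Phi_\varep(s):=\int_k^s\phi_\varep(\tau)\,\d\tau,
\]
one has $\pl_tu\cdot\phi_\varep(u)=\pl_t\Phi_\varep(u)$, and integrating by parts in time recasts $\iint\zeta\,\pl_t\Phi_\varep(u)\,\dx\dt$ as $\int_K\zeta\Phi_\varep(u)\,\dx\big|_{t_1}^{t_2}-\iint\Phi_\varep(u)\,\pl_t\zeta\,\dx\dt$. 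The diffusion term splits as $\phi_\varep(u)\bl{A}(x,t,u,Du)\cdot D\zeta+\zeta\,\phi_\varep'(u)\bl{A}(x,t,u,Du)\cdot Du$, and the second summand is non-negative by the ellipticity in \eqref{Eq:1:2} together with $\zeta,\,\phi_\varep'\ge0$; it can therefore be discarded in the $\le$ inequality.

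Sending $\varep\downarrow0$ via dominated convergence (using the local boundedness of $u$ and the fact that $|Du|^{p-1}\in L^{p/(p-1)}_{\loc}$, together with the uniform bound on $\phi_\varep$), I obtain $\Phi_\varep(u)\to(u-k)_+$ and $\phi_\varep(u)\to\chi_{[u>k]}$ pointwise a.e., yielding
\[
\int_K\zeta(u-k)_+\dx\bigg|_{t_1}^{t_2}+\iint_{K\times(t_1,t_2)}\!\Big[-(u-k)_+\pl_t\zeta+\chi_{[u>k]}\bl{A}(x,t,u,Du)\cdot D\zeta\Big]\dx\dt\le 0.
\]
On $[u>k]$ one has $Du=D(u-k)_+$ and $u=(u-k)_++k$, while on $[u\le k]$ the Stampacchia chain rule gives $D(u-k)_+=0$ a.e., and then the growth bound in \eqref{Eq:1:2} forces $\bl{A}(x,t,u,D(u-k)_+)=0$ a.e.\ on that set. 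Hence $\chi_{[u>k]}\bl{A}(x,t,u,Du)=\tilde{\bl{A}}(x,t,(u-k)_+,D(u-k)_+)$ almost everywhere, where $\tilde{\bl{A}}(x,t,w,\xi):=\bl{A}(x,t,w+k,\xi)$ inherits \eqref{Eq:1:2} with the same constants. This identifies $(u-k)_+$ as a weak sub-solution of \eqref{Eq:p-Laplace}. The main technical step is the admissibility check for $\zeta\phi_\varep(u)$ and the $\varep\downarrow0$ passage in the gradient integrand—delicate near $[u=k]$, but routine given the a priori regularity of $u$.
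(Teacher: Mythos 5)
Your argument is correct and follows essentially the same route as the paper: test the Sobolev-form weak inequality \eqref{Eq:int-form} against a cutoff supported in $[u>k]$, observe that the terms involving $\chi_{[u\le0]}$ drop out because $k>0$, and pass to the limit to recover the weak formulation for the parabolic $p$-Laplacian satisfied by $(u-k)_+$. The only difference is cosmetic: the paper's proof invokes the test function $\z=\frac{(u-k)_+}{(u-k)_++\varep}\vp$ (as in \cite[Chapter~1, Lemma~1.1]{DBGV-mono}), whereas you use a smooth nondecreasing cutoff $\phi_\varep(u)$ vanishing below $k$ and equal to $1$ above $k+\varep$; both approximations of $\chi_{[u>k]}$ lead to the same cancellation in the gradient term (via ellipticity and $\phi_\varep'\ge 0$) and the same limit, and your identification of the new structure function $\tilde{\bl A}(x,t,w,\xi)=\bl A(x,t,w+k,\xi)$ and the check that $\tilde{\bl A}(x,t,0,0)=0$ on $[u\le k]$ correctly show that \eqref{Eq:1:2} is inherited.
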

\begin{proof}
Upon using the test function
\[
\z=\frac{(u-k)_+}{(u-k)_++\varep}\vp
\]
 where $\varep>0$, and $\vp\ge0$ satisfying \eqref{Eq:function-space} in the weak formulation \eqref{Eq:int-form} in Section~\ref{S:1:2}, 
 and noticing that the terms involving $\chi_{[u\le0]}$ all vanish due to the choice $k>0$,
the proof then runs similarly to the one in \cite[Lemma~1.1, Chapter~1]{DBGV-mono}.
\end{proof}

\section{Proof of Theorem~\ref{Thm:1:1}}\label{S:Thm-proof}
For  a cylinder $Q_{o}:= Q_R=K_{R}(x_o)\times(t_o-R^{p},t_o)$ with $R\in(0,1)$,
we assume that $Q_{o}\subset E_T$ and  introduce the numbers $\mu^{\pm}$ and $\om$ satisfying
\begin{equation*}
	\mu^+\ge\essup_{Q_{o}} u,
	\quad 
	\mu^-\le\essinf_{Q_{o}} u,
	\quad
	\om\ge\mu^+ - \mu^-.
\end{equation*}
Theorem~\ref{Thm:1:1} is a direct consequence of the following.
\begin{proposition}\label{Prop:int-reg}
	Let $u$ be a locally bounded,  local weak solution to \eqref{Eq:1:1} with 
	 \eqref{Eq:1:2} in $E_T$
	for $p\in(p_*,2)$.
Then there exists a modulus of continuity $\boldsymbol\om(\cdot)$  depending only on the data and $\om$, such that
\[
\essosc_{\widetilde{Q}_r}u\le \boldsymbol\om\Big(\frac{r}{R}\Big), \quad\text{ for all }r\in(0,R)
\]
where $\widetilde{Q}_r\subset Q_o$ is a reference cylinder defined in \eqref{Eq:Q_R}.
Moreover, we can quantify the modulus as
\[
(0,1)\ni\rho\mapsto\boldsymbol\om(\rho)=C \Big(\ln\ln\ln\frac{1}{\rho}\Big)^{-\sig}
\]
for some $\sig\in(0,1)$ depending only on $\{p,N\}$, and some $C>0$ depending on the data and $\om$.
If in addition, the structure conditions \eqref{Eq:5:1:3} -- \eqref{Eq:5:1:4} hold, then 
we can improve the modulus to be
\[
(0,1)\ni\rho\mapsto\boldsymbol\om(\rho)=C \Big(\ln\frac{1}{\rho}\Big)^{-\sig}
\]
where $\sig$ and $C$ have the same dependences as above.
\end{proposition}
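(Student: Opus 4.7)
The plan is to establish an oscillation-reduction mechanism that can be iterated on a nested family of intrinsic cylinders shrinking to the vertex $(x_o,t_o)$. Given a controlling oscillation $\omega$ of $u$ on the top cylinder, the goal is to produce a smaller intrinsic cylinder on which the oscillation is reduced to $(1-\kappa)\omega$, with $\kappa\in(0,1)$ depending quantitatively on $\omega$ itself. Iterating such a step yields a non-geometric recursion $\omega_{n+1}\le(1-\kappa(\omega_n))\omega_n$, whose slow decay is what produces a logarithmic, rather than power-type, modulus of continuity.

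The iteration step proceeds through two alternatives inside a carefully chosen intrinsic cylinder $Q_\rho(\theta)$ with $\theta=(\xi\omega)^{2-p}$ and a small parameter $\xi\in(0,1)$. In the first alternative, if
\[
\bigl|[u\le\mu^-+\xi\omega]\cap Q_\rho(\theta)\bigr|\le c_o(\xi\omega)^{(N+p)/p}\bigl|Q_\rho(\theta)\bigr|,
\]
then Lemma~\ref{Lm:DG:int} directly yields $u\ge\mu^-+\tfrac12\xi\omega$ on $Q_{\rho/2}(\theta)$, lifting the infimum and hence reducing the oscillation by $\tfrac12\xi\omega$. In the opposite alternative, a standard slicing-in-time argument extracts a time level $t_\star\in(t_o-\theta\rho^p,t_o)$ at which
\[
\bigl|[u(\cdot,t_\star)>\mu^-+\xi\omega]\cap K_\rho\bigr|\ge\alpha|K_\rho|,
\]
with $\alpha$ a quantitative function of $\xi$ and the data.

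To turn this measure information into a pointwise bound in a forward cylinder, I would proceed in two substeps. First, above a positive threshold the graph $\beta$ contributes no singularity, so in that regime $u$ is a genuine super-solution to the pure parabolic $p$-Laplacian (this is the role of the auxiliary Lemma~\ref{Lm:sub-solution}, applied to the relevant truncation). Then I would invoke the $L^1$ Harnack inequality (Lemma~\ref{Lm:L1}) to propagate the $L^1$-mass information from $t_\star$ across a controlled time strip, with an error governed by $((t-s)/\rho^\lambda)^{1/(2-p)}$; here the super-critical condition $p>p_\star$, equivalent to $\lambda=N(p-2)+p>0$, is precisely what keeps this correction subordinate. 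Anchored at a favorable time slice, the improved expansion of positivity (second part of Lemma~\ref{Lm:expansion}) then upgrades the measure bound into a pointwise estimate $u\ge\mu^-+\eta_o\alpha\xi\omega$ on a smaller intrinsic cylinder, giving a reduction of oscillation linear in $\alpha$.

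The main difficulty will be orchestrating the various intrinsic scales coherently: the Stefan cylinder $Q_\rho(\theta)$ from Lemma~\ref{Lm:DG:int}, the host cylinder $K_{8\rho}\times(0,\delta M^{2-p}\rho^p)$ required by Lemma~\ref{Lm:expansion} with $M\sim\xi\omega$, and the time strip needed for Lemma~\ref{Lm:L1}, must all nest inside the ambient cylinder $Q_o$, and the contraction factors from the two alternatives must be made comparable up to a power of $\omega$. The linear-in-$\alpha$ dependence $\eta=\eta_o\alpha$ available in the improved expansion lemma (only under \eqref{Eq:5:1:3}--\eqref{Eq:5:1:4}) is the decisive quantitative input that produces a recursion of the form $\omega_{n+1}\le(1-c\omega_n^{\tau})\omega_n$ with $\tau>0$ related to $\lambda/(N+p)$; its solution decays like $(\ln n)^{-1/\tau}$, and translating back to the spatial scale yields the logarithmic modulus with $\sigma=\frac{N(p-2)+p}{4(N+p)}$. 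Under only \eqref{Eq:1:2}, the first part of Lemma~\ref{Lm:expansion} forces a doubly exponential dependence $\eta=\eta_o\exp\{-2^{\gamma/\alpha^{p+2}}\}$, and the same iteration scheme then only yields the triple-logarithmic modulus stated in the first part of the proposition.
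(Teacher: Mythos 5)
Your overall blueprint matches the paper's: De~Giorgi dichotomy on an intrinsic cylinder, $L^1$~Harnack propagation of mass, expansion of positivity, and a non-geometric recursion $\omega_{n+1}=\omega_n(1-\eta\,\omega_n^q)$ driving a logarithmic modulus. The two quantitative inputs you single out (linear~in~$\alpha$ versus doubly-exponential~in~$\alpha$ dependence in the expansion lemma) are indeed exactly what produces the two stated moduli.

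However, the second alternative in your proposal has a directionality error that breaks the oscillation-reduction logic. If $\bigl|[u\le\mu^-+\tfrac14\omega]\cap Q_\rho(\theta)\bigr|>\alpha|Q_\rho(\theta)|$, the time-slicing argument yields a slice where the set where $u$ is \emph{small} (equivalently $[\mu^+-u\ge\tfrac14\omega]$) has measure at least $\tfrac12\alpha|K_\rho|$, not the set $[u(\cdot,t_\star)>\mu^-+\xi\omega]$ as you wrote. One then defines $v=\tfrac18\omega-(u-k)_+$ with $k=\mu^+-\tfrac18\omega$ (positive, thanks to the dichotomy on $\mu^\pm$), which is a non-negative super-solution to the $p$-Laplacian, and the conclusion of the expansion of positivity is a \emph{lower} bound on $v$, hence an \emph{upper} bound $u\le\mu^+-\eta\omega^{1+q}$. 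As written, both of your alternatives would lift $u$ away from $\mu^-$, which would not close the iteration when the lower level set is large.

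Two further points you glossed over that are needed to make the step quantitative. First, the $L^1$~Harnack inequality must be applied from the small cube $K_\rho$ at time $t_\star$ to a much larger cube $K_{R/2}$, and the radius $\rho$ is \emph{chosen} (as $\rho=R\widetilde{c}\,\omega^{\widetilde{q}}$ with $\widetilde{q}=(2-p)(N+p)/(p\lambda)$) precisely so that the error term $\gamma((t-t_\star)/R^\lambda)^{1/(2-p)}$ is dominated by the main term; this is where the constraint $\lambda>0$, i.e.\ $p>p_\star$, is used, not merely to keep the error ``subordinate.'' Second, the measure information must be propagated to \emph{all} times $t\in(-\tfrac12\alpha\theta\rho^p,0)$, and the expansion of positivity has to be applied at \emph{each} such time level so that the union of the resulting forward strips fills a cylinder $Q_{R/2}(\bar\theta)$ with vertex at the origin. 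A single application anchored at $t_\star$ need not reach the top time level and thus does not nest with $Q_{\rho/2}(\theta)$ from the first alternative. These are not cosmetic: they determine the exponents $q,\widetilde{q},\bar{q}$ whose interplay ($2-p+p\widetilde{q}<\bar{q}<2-p+2p\widetilde{q}$) makes the induction and the final computation of $\sigma$ go through.
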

\subsection{Proof of Proposition~\ref{Prop:int-reg} begins}\label{S:3:0}
We will first deal with the case when the conditions \eqref{Eq:5:1:3} -- \eqref{Eq:5:1:4} hold
in Sections~\ref{S:3:0} -- \ref{S:modulus}. 
Then the proof for the case without them is analogous, and will be briefly indicated in Section~\ref{S:nonoptimal-modulus}.

Let us assume that $(x_o,t_o)$ coincides with the origin and $\om\le1$ with no loss of generality.
Set $\theta:=(\frac14\om)^{2-p}$ and let $$\rho=R\widetilde{c} \om^{\widetilde{q}}\quad\text{ where } 
\widetilde{q}:=\tfrac{(2-p)(N+p)}{p\lm},$$
and for some $\widetilde{c}\in(0,1)$ to be determined in terms of the data in \eqref{Eq:choice-rho}, such that
\begin{equation}\label{Eq:set-inclusion}
Q_{\rho}(\theta)\subset Q_o\quad\text{ and }\quad \essosc_{Q_{\rho}(\theta)}u\le\om.
\end{equation}
It is important that $\widetilde{q}>0$ and the set inclusion \eqref{Eq:set-inclusion}$_1$ is verified;
this amounts to requiring the super-critical range of $p$ in \eqref{Eq:p-range}.

We first observe that one of following must hold:
\begin{equation}\label{Eq:mu-pm-0}
\mu^+-\tfrac18\om\ge  \tfrac18\om\quad\text{ or }\quad\mu^-+\tfrac18\om\le -\tfrac18\om.
\end{equation}
Otherwise we would have $\mu^+-\mu^-<\frac12\om$, which will be considered later.
Let us suppose the first one, i.e. \eqref{Eq:mu-pm-0}$_1$, holds as the other case is similar.

Consider the following two alternatives:
 \begin{equation}\label{Eq:alternative-int}
\left\{
\begin{array}{cc}
|[u\le\mu^-+\tfrac14\om]\cap Q_{\rho}(\theta)|\le c_o(\tfrac14\om)^{\frac{N+p}p}|Q_{\rho}(\theta)|,\\[5pt]
|[u\le\mu^-+\tfrac14\om]\cap Q_{\rho}(\theta)|> c_o(\tfrac14\om)^{\frac{N+p}p}|Q_{\rho}(\theta)|.
\end{array}\right.
\end{equation}
Here $c_o\in(0,1)$ is determined in Lemma~\ref{Lm:DG:int}.
The following argument consists in showing that the oscillation can be reduced in either case of \eqref{Eq:alternative-int}.

If the first alternative \eqref{Eq:alternative-int}$_1$ holds true, then by Lemma~\ref{Lm:DG:int}
we have
\[
u\ge \mu^-+\tfrac18\om\quad\text{ a.e. in }Q_{\frac12\rho}(\theta),
\]
which in turn yields a reduction of oscillation
\begin{equation}\label{Eq:reduc-osc-1}
\essosc_{Q_{\frac12\rho}(\theta)}u\le\tfrac78\om.
\end{equation}

Before heading to the next stage, it would be helpful to refer to Figure~\ref{Fig-1}
and shed some light on the next step, which is the tricky part.
The choice of $\rho$ is made to accommodate the singularity of $\be(\cdot)$,
through its dependence on $\om$. As such, the cylinder $Q_\rho(\theta)$ is small comparing with $Q_R$. 
Nevertheless, when we go with the second alternative \eqref{Eq:alternative-int}$_2$ next,
only the  singularity of the $p$-Laplacian matters.
Unfortunately, the ``space-time-ratio" of $Q_\rho(\theta)$ is not large enough to accommodate such singularity.
To overcome this difficulty, we will switch the measure information in  \eqref{Eq:alternative-int}$_2$  to
$Q_R(\bar\theta)$ for some $\bar\theta\ll\theta$, followed by an application of the $L^1$ Harnack inequality to propagate it
to the top time level, and then reduce the oscillation  via the expansion of positivity.

\begin{figure}[t]\label{Fig-1}
\centering
\includegraphics[scale=1]{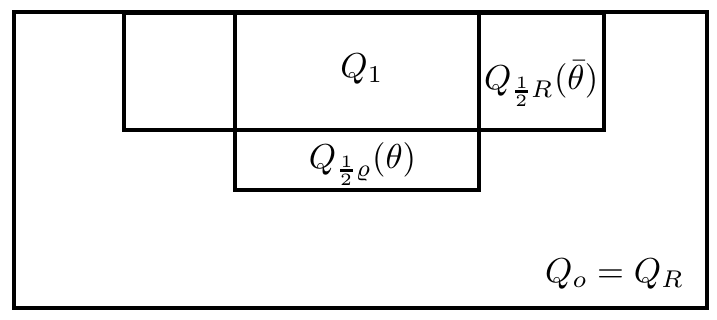}
\caption{Cylinders}
\end{figure}
\subsection{Reduction of oscillation away from zero}\label{S:3:1}
Now we turn our attention to the second alternative \eqref{Eq:alternative-int}$_2$.
Since $\mu^+-\frac14\om\ge\mu^-+\frac14\om$ always holds true, we may rephrase \eqref{Eq:alternative-int}$_2$ as
\[
|[\mu^+-u\ge\tfrac14\om]\cap Q_{\rho}(\theta)|> c_o(\tfrac14\om)^{\frac{N+p}p}|Q_{\rho}(\theta)|=:\al |Q_{\rho}(\theta)|.
\]
Based on this, it is not hard to check that there exists $t_*\in[-\theta\rho^p, -\tfrac12\al\theta\rho^p]$, such that
\begin{equation}\label{Eq:DG:measure:1}
|[\mu^+-u(\cdot, t_*)\ge\tfrac14\om]\cap K_{\rho}  |> \tfrac12\al |K_{\rho}|.
\end{equation}

To proceed, we introduce a new function $v$ defined by
\[
v:=\tfrac18\om-(u-k)_+\quad\text{ with }k=\mu^+ - \tfrac18\om.
\]
Since we have assumed that \eqref{Eq:mu-pm-0}$_1$ holds true, the level $k>0$ and by Lemma~\ref{Lm:sub-solution}
the function $v$ is actually a non-negative, weak super-solution to the parabolic $p$-Laplacian \eqref{Eq:p-Laplace} with \eqref{Eq:1:2}
 in $Q_o$.
The measure information in \eqref{Eq:DG:measure:1}
can be rephrased in terms of $v$ as
\begin{equation}\label{Eq:DG:measure:2}
|[v(\cdot, t_*)\ge\tfrac18\om]\cap K_{\rho}  |> \tfrac12\al |K_{\rho}|\quad\text{ where }\al=c_o(\tfrac14\om)^{\frac{N+p}p}.
\end{equation}

Now we apply Lemma~\ref{Lm:L1} to $v$ in $K_{\frac14R}\subset K_{\frac12R}$ and with $t_*<t<0$.
As a result, we obtain that
\begin{equation}\label{Eq:time:1}
\begin{aligned}
\sup_{t_*<s<t}\int_{K_{\frac14R}}v(x,s)\,\dx&\le\gm\int_{K_{\frac12R}}v(x,t)\,\dx+\gm\Big(\frac{t-t_*}{R^\lm}\Big)^{\frac1{2-p}}\\
&\le\gm\int_{K_{\frac12R}}v(x,t)\,\dx+\gm\Big(\frac{\theta\rho^p}{R^\lm}\Big)^{\frac1{2-p}}\\
&\le\gm\int_{K_{\frac12R}}v(x,t)\,\dx+ \gm\om R^N\Big(\frac{\rho}{R}\Big)^{\frac{p}{2-p}}.
\end{aligned}
\end{equation}
Recall that $\lm=N(p-2)+p$.
We continue to estimate the left-hand side of \eqref{Eq:time:1} from below by using \eqref{Eq:DG:measure:2}:
\begin{equation}\label{Eq:time:2}
\begin{aligned}
\sup_{t_*<s<t}\int_{K_{\frac14R}}v(x,s)\,\dx&\ge \int_{K_\rho}v(x,t_*)\,\dx\ge \tfrac18\om |[v(\cdot, t_*)\ge\tfrac18\om]\cap K_\rho|\\
&\ge \tfrac1{16} c_o(\tfrac14\om)^{1+\frac{N+p}p}|K_\rho|.
\end{aligned}
\end{equation}
In order to combine \eqref{Eq:time:1} and  \eqref{Eq:time:2},
we select $\rho$ to satisfy that
\begin{equation*}
\gm\om R^N\Big(\frac{\rho}{R}\Big)^{\frac{p}{2-p}}=\tfrac1{32} c_o \rho^N(\tfrac14\om)^{1+\frac{N+p}p},
\end{equation*}
that is,
\begin{equation}\label{Eq:choice-rho} 
\rho=R\widetilde{c} \om^{\widetilde{q}}
\quad\text{ where  }\quad\widetilde{q}:=\tfrac{(2-p)(N+p)}{p\lm}\quad\text{ and }\quad
 \widetilde{c}=\Big(\tfrac{1}{2\gm} c_o 4^{-3-\frac{N+p}p}\Big)^{\frac{2-p}\lm}.
\end{equation}
We emphasize again that the super-critical range of $p$ in \eqref{Eq:p-range} is used
in this step, to ensure $\widetilde{q}>0$.
Substituting this choice of $\rho$ in \eqref{Eq:time:1} --  \eqref{Eq:time:2} and combining the two,
we obtain that
\[
\gm\int_{K_{\frac12R}}v(x,t)\,\dx\ge\tfrac1{32} c_o \rho^N(\tfrac14\om)^{1+\frac{N+p}p}\equiv 
\gm \widetilde{c}^{\frac{p}{2-p}} R^N \om^{1+\frac{N+p}\lm}.
\]
Based on this lower bound, we proceed to estimate for some $\sig>0$ that
\begin{align*}
R^N \om^{1+\frac{N+p}{\lm}}&\le \bar\gm\int_{K_{\frac12R}}v(x,t)\,\dx\\
&=\bar\gm \int_{K_{\frac12R}\cap [v\le\sig]}v(x,t)\,\dx+ \bar\gm\int_{K_{\frac12R}\cap [v>\sig]}v(x,t)\,\dx\\
&\le\bar\gm\sig|K_{\frac12R}|+\bar\gm\om|K_{\frac12R}\cap [v(\cdot,t)>\sig]|,
\end{align*}
where $\bar\gm=\widetilde{c}^{\frac{p}{p-2}}$.
After choosing $\sig=\frac1{2\bar\gm}\om^{1+\frac{N+p}{\lm}}$, the above estimate yields that
\begin{equation}\label{Eq:measure-v}
|K_{\frac12R}\cap [v(\cdot,t)>c\om^{1+\frac{N+p}{\lm}}]|\ge c\om^{\frac{N+p}{\lm}}|K_{\frac12R}|,  \quad\text{ where }c=\tfrac1{2\bar\gm},
\end{equation}
for all $t\in(-\tfrac12\al\theta\rho^p, 0)$, which by the definition of $\al$, $\theta$ and $\rho$ amounts to
\begin{equation}\label{Eq:time-interval}
-\bar{c}\om^{\frac{N+p}{p}+(2-p)(1+\frac{N+p}{\lm})} R^p<t<0\quad\text{ where } 
\bar{c}=2^{-\gm}c_o\widetilde{c}^p 
\end{equation}
for some $\gm(p,N)>1$.

The measure information \eqref{Eq:measure-v} for all times in \eqref{Eq:time-interval}
serves as the basis for an application of Lemma~\ref{Lm:expansion} to obtain pointwise estimate.
According to the second part of Lemma~\ref{Lm:expansion} 
(see Section~\ref{S:nonoptimal-modulus} for the application of the first part), 
this measure information \eqref{Eq:measure-v} at each time level $t$ of the  interval
\eqref{Eq:time-interval} yields that
\[
v(\cdot, s)\ge\eta \om^{1+q}\quad\text{ a.e. in } K_{\frac12 R},
\]
where
\[
s=t+\dl_o\big(c\om^{\frac{N+p}{\lm}}\big)^{p+1}\big(c\om^{1+\frac{N+p}{\lm}}\big)^{2-p} R^p, \quad
q=2\tfrac{N+p}{\lm},
\]
$ \dl_o$ is determined in Lemma~\ref{Lm:expansion} and $\eta\in(0,1)$ depends only on the data.
Notice that the power of $\om$ in the above time level $s$ is 
larger than that in the left end of \eqref{Eq:time-interval}.
Consequently, when $t$ ranges in the interval \eqref{Eq:time-interval}, 
we obtain that
\[
v\ge\eta \om^{1+q}\quad\text{ a.e. in } Q_{\frac12 R}(\bar{\theta}),
\]
where
\[
\bar{\theta}=\bar{c}\om^{\bar{q}},\quad\bar{q}=\tfrac{N+p}{p}+(2-p)\big(1+\tfrac{N+p}{\lm}\big). 
\]
 By the definition of $v$, taking $\eta\le\frac18$ this yields a reduction of oscillation for $u$ as
 \begin{equation}\label{Eq:reduc-osc-2}
 \essosc_{Q_{\frac12 R}(\bar{\theta})}u\le (1-\eta\om^q)\om.
 \end{equation}
 
 Now we may combine the reduction of oscillation in \eqref{Eq:reduc-osc-1} and \eqref{Eq:reduc-osc-2},
 irrespective of the alternatives in \eqref{Eq:alternative-int}.
 To this end, recalling the choice of $\rho$ in \eqref{Eq:choice-rho}
 and noticing the simple fact that $2-p+p\widetilde{q}<\bar{q}$, one easily verifies that (cf.~Figure~\ref{Fig-1})
 \[
 Q_{\frac12 R}(\bar{\theta})\cap Q_{\frac12 \rho}( \theta)=K_{\frac12\widetilde\theta R}\times(-\tfrac1{2^p}\bar\theta R^p,0)=:Q_1,
 \]
 where
 \[
\widetilde{\theta}=  \widetilde{c} \om^{\widetilde{q}} \quad\text{ and }\quad \bar{\theta}=\bar{c}\om^{\bar{q}}.
 \]
 Therefore, combining \eqref{Eq:reduc-osc-1} and \eqref{Eq:reduc-osc-2}, we arrive at
  \begin{equation}\label{Eq:reduc-osc-3}
 \essosc_{Q_1}u\le (1-\eta\om^q)\om.
 \end{equation}
 This reduction of oscillation also takes into account the violation of \eqref{Eq:mu-pm-0}.

 \subsection{The induction}\label{S:induction}
 In order to proceed by induction, we introduce
 \[
\om_1= (1-\eta\om^q)\om,\quad R_1=\tfrac12\widetilde{\theta}R,\quad \theta_1=(\tfrac14\om_1)^{2-p},\quad \rho_1=\widetilde{c} R_1\om_1^{\widetilde{q}}.
 \]
To iterate the previous argument, we need to verify the set inclusion
\begin{equation}\label{Eq:set-inclusion-1}
Q_{\rho_1}(\theta_1)\subset Q_1,\quad\text{ i.e. }\quad\theta_1\rho_1^p\le\tfrac1{2^p}\bar\theta R^p,
\end{equation}
which takes the place of \eqref{Eq:set-inclusion}$_1$ in the next stage.
Let us examine the height of the cylinder $Q_{\rho_1}(\theta_1)$, that is,
\[
\theta_1\rho^p_1=(\tfrac14\om_1)^{2-p}(\widetilde{c}R_1\om_1^{\widetilde{q}})^p\le 
(\tfrac14\om)^{2-p}(\tfrac12 \widetilde{c}^2\om^{2\widetilde{q}})^p R^p=2^{p-4}\widetilde{c}^{2p}\om^{2-p+2p\widetilde{q}} R^p.
\]
Here we have used the fact $\om_1\le\om$ to estimate.
Attention is called to the power of $\om$ on the right-hand side as one easily checks that
$2-p+2p\widetilde{q}>\bar{q}$,
using the definition of $\bar{q}$ and $\widetilde{q}$, and the fact that $\lm\in(0,p)$.
Properly making $\widetilde{c}$ smaller if necessary, this implies that
\[
\theta_1\rho^p_1\le 2^{p-4}\widetilde{c}^{2p}\om^{2-p+2p\widetilde{q}} R^p\le \tfrac1{2^p}\bar{c}\om^{\bar{q}}R^p=\tfrac1{2^p}\bar\theta R^p.
\]
Hence the set inclusion \eqref{Eq:set-inclusion-1} is verified.
According to \eqref{Eq:reduc-osc-3}, we then have the intrinsic relation
that takes the place of  \eqref{Eq:set-inclusion}$_2$, i.e.
\[
\essosc_{Q_{\rho_1}(\theta_1)}u\le\om_1,
\]
and the previous argument
can be repeated, such that
 \begin{equation*}
 \essosc_{Q_2}u\le (1-\eta\om_1^q)\om_1\quad\text{ where }
Q_2:=K_{\frac12\widetilde\theta_1 R_1}\times(-\tfrac1{2^p}\bar\theta_1 R_1^p,0)
 \end{equation*}
  and
 \[
\widetilde{\theta}_1=  \widetilde{c} \om_1^{\widetilde{q}}, \qquad \bar{\theta}_1=\bar{c}\om_1^{\bar{q}}
 \]
 for the same parameters $\{\eta,\widetilde{c}, \bar{c}, q, \widetilde{q},\bar{q}\}$.
 
 Now we may proceed by induction and introduce the following notations for $n\in\nn$:
\begin{equation*}
\left\{
\begin{array}{cc}
R_o=R,\quad  R_{n+1}=\tfrac12\widetilde\theta_n R_n,\quad\rho_n=\widetilde{c} R_n\om_n^{\widetilde{q}},\\[5pt]
\quad\theta_n= (\tfrac14\om_n)^{2-p},\quad
\widetilde{\theta}_n=  \widetilde{c} \om_n^{\widetilde{q}}, \quad \bar{\theta}_n=\bar{c}\om_n^{\bar{q}}\\[5pt]
\dsty\om_o=\om,\quad \om_{n+1}= \om_n(1-\eta\om^q_n),\\[5pt]
Q_{n+1}=K_{\frac12\widetilde\theta_n R_n}\times(-\tfrac1{2^p}\bar\theta_n R_n^p,0),\quad Q'_{n}=Q_{\rho_n}(\theta_n).
\end{array}\right.
\end{equation*}
Using the induction we have for all $n\in\nn$ that
\[
Q'_{n}\subset Q_{n}\quad\text{ and }\quad\essosc_{Q_n}\le\om_n.
\]
\subsection{Derivation of the modulus of continuity}\label{S:modulus}
We will derive a modulus of continuity of $u$ inherent in the above oscillation estimate.
For this purpose, we first observe that if a sequence $\{a_n\}$ satisfies
\[
a_n\ge a_{n-1}(1-\eta a_{n-1}^q)\quad\text{ for all }n\ge n_o, \quad\text{ and }\quad a_{n_o}\ge\om_{n_o},
\]
then $a_n\ge \om_n$ for all $n\ge n_o$. As a matter of fact, we may take $a_n= (1+n)^{-\sig} $ with any
 $\sig\in(0,\frac1q)$. In such a case,
the number $n_o$ is determined by $\{q,\eta,\sig\}$ only and hence we may assume $n_o=0$ for simplicity.
Therefore we obtain for all $n\ge0$ that
\[
\essosc_{Q_{n}} u\le \om_n\le\frac{1}{(1+n)^{\sig}}.
\]
Let us take  a number $r\in(0,R)$. 
Since $\bar{c}\om^{\bar{q}}r^p\in(0,\bar\theta_o R^p_{o})$, there must be some $n\ge0$ such that
\begin{equation}\label{Eq:trap-R}
\bar\theta_{n+1}R^p_{n+1}< \bar{c}\om^{\bar{q}}r^p\le\bar\theta_n R^p_{n}.
\end{equation}
From the right-hand side  of \eqref{Eq:trap-R}, we may estimate that
\[
\om^{\widetilde{q}}r=\om^{\frac{\bar{q}}{p}}r \om^{\widetilde{q}-\frac{\bar{q}}{p}}
\le \om_n^{\frac{\bar{q}}p}R_n\om^{\widetilde{q}-\frac{\bar{q}}{p}}\le\om_n^{\widetilde{q}}R_n,
\]
where in the last inequality we have used $\widetilde{q}p<\bar{q}$ and $\om_n\le\om$.
Consequently, we obtain that
\begin{equation}\label{Eq:osc-final}
\essosc_{\widetilde{Q}_{r}} u\le \essosc_{Q_{n} } u\le \om_n\le\frac{1}{(1+n)^{\sig}},
\end{equation}
where
\begin{equation}\label{Eq:Q_R}
 \widetilde{Q}_{r}=K_{\frac12\widetilde{\theta} r}\times(-\tfrac1{2^p}\bar\theta r^p,0).
 \end{equation}
Next, we analyze the left-hand side of \eqref{Eq:trap-R}.
First notice that by the definition of $\om_n$ and assuming $\eta\om^q\le\frac12$ with no loss of generality, 
we can estimate for all $n\ge0$ that
\[
\om_n\ge\frac{\om}{2^n}.
\]
Using this, we may estimate
\[
\bar\theta_{n+1}=\bar{c}\om_{n+1}^{\bar{q}}\ge\bar{c}\Big(\frac{\om}{2^{n+1}}\Big)^{\bar{q}},
\]
and 
\begin{align*}
R_{n+1}=\widetilde{\theta}_nR_n\ge \tfrac12\widetilde{c}\Big(\frac{\om}{2^{n}}\Big)^{\widetilde{q}}R_n
\ge R \prod_{i=0}^{n} \tfrac12\widetilde{c}\Big(\frac{\om}{2^{i}}\Big)^{\widetilde{q}}
=R (\tfrac12\widetilde{c})^{n+1}\frac{\om^{\widetilde{q}(n+1)}}{2^{\widetilde{q}\frac{n(n+1)}2}}.
\end{align*}
Then the  left-hand side of \eqref{Eq:trap-R} gives that
\[
\bar{c}\om^{\bar{q}}r^p>\bar\theta_{n+1}R^p_{n+1}\ge R^p\bar{c}\Big(\frac{\om}{2^{n+1}}\Big)^{\bar{q}}
\bigg[ (\tfrac12\widetilde{c})^{n+1}\frac{\om^{\widetilde{q}(n+1)}}{2^{\widetilde{q}\frac{n(n+1)}2}}\bigg]^p
\]
Then taking logarithm on both sides, a simple calculation will give us a lower bound of $n$ by
\[
n^2\ge\frac{\gm}{1-\ln\om}\ln\frac{R}{r}\quad\text{ for some }\gm=\gm(p,\widetilde{q},\widetilde{c})>1.
\]
Substituting this into \eqref{Eq:osc-final}, we obtain the modulus of continuity
\[
\essosc_{\widetilde{Q}_{r}}u\le \gm (1-\ln\om)^{\frac{\sig}2}\Big(\ln\frac{R}{r}\Big)^{-\frac{\sig}2}\quad\text{ for all }r\in(0,R).
\]
\subsection{Continuity without the comparison principle}\label{S:nonoptimal-modulus}
In this section, we briefly indicate how to modify the arguments in Sections~\ref{S:3:1} -- \ref{S:modulus}
to obtain continuity without the structure conditions \eqref{Eq:5:1:3} -- \eqref{Eq:5:1:4}.

One starts the proof just like Section~\ref{S:3:0} and continues with Section~\ref{S:3:1}.
The present argument departs from \eqref{Eq:measure-v} -- \eqref{Eq:time-interval}.
Now we are only allowed to use the first part of Lemma~\ref{Lm:expansion}.
As a result, we obtain that
\[
v\ge c\eta_o \om ^{q_o}\exp\Big\{-2^{-\frac{\gm}{\om^q}}\Big\}\quad\text{ a.e. in } Q_{\frac12 R}(\bar{\theta}),
\]
where we have set
\[
\bar{\theta}=\bar{c}\om^{\bar{q}},\quad\bar{q}=\tfrac{N+p}{p}+(2-p)q_o,\quad q_o=1+\tfrac{N+p}{\lm}, \quad q=(p+2)\tfrac{N+p}\lm.
\]
This in turn yields the reduction of oscillation
\[
\essosc_{Q_{\frac12 R}(\bar{\theta})}u\le\big(1-\eta(\om)\big)\om\quad\text{ where }\eta=c\eta_o\om ^{q_o}\exp\Big\{-2^{-\frac{\gm}{\om^q}}\Big\}.
\]
Next, this is combined with  the other reduction of oscillation \eqref{Eq:reduc-osc-1}
and the violation of \eqref{Eq:mu-pm-0},
as at the end of Section~\ref{S:3:1}. Then we may employ the induction argument  presented in Section~\ref{S:induction}.
The main difference is the recurrence of $\{\om_n\}$, which now reads, for all $n\in\nn$,
\[
\om_{n+1}=\big(1-\eta(\om_n)\big)\om_n.
\]
An inspection of the recurrence shows that there exists $n_o\in\nn$ depending on the parameters
$\{c,\eta_o,\gm,q,q_o\}$, such that for all $n\ge n_o$,
\[
\om_n\le\big(\ln\ln (n+1)\big)^{-\sig}
\]
for some $\sig\in(0,1)$ depending only on$ \{q,q_o\}$. Reasoning like in Section~\ref{S:modulus},
we obtain an estimate on the modulus of continuity
\[
\essosc_{\widetilde{Q}_{r}}u\le C\Big(\ln\ln\ln\frac{R}{r}\Big)^{-\frac{\sig}2}\quad\text{ for all }r\in(0,R),
\]
for some $C>0$ depending on the data and $\om$, and $\widetilde{Q}_r$ is defined as in \eqref{Eq:Q_R}.
\section{Uniform approximations}\label{S:approx}
Construction of weak solutions to the Stefan problem \eqref{Eq:1:1} generally
consists in first solving regularized versions, deriving {\it a priori} estimates
and obtaining a weak solution via a proper limiting process based on compactness arguments.
Nevertheless, {\it a priori} estimates on the time derivative of solutions are in general not available in this scheme.
Hence, Theorem~\ref{Thm:1:1} does not grant continuity to the so-obtained weak solution automatically.
The purpose of the present section is to demonstrate that
the arguments presented in Section~\ref{S:Thm-proof} can be adapted to 
obtain the equicontinuity of the approximating solutions. Thus the continuity of the limiting function
can be achieved via the Ascoli-Arzela theorem.

To this end, like in \cite[Section~7]{Liao-Stefan},
 let $H_\varep(s)$ be the mollification with $\varep\in(0,1)$, by the standard Friedrichs kernel
supported in $(-\varep,\varep)$ 
of the function
\begin{equation*}
H(s):=\left\{
\begin{array}{cl}
0,\quad&s>0,\\[5pt]
-\nu,\quad& s\le0.
\end{array}\right.
\end{equation*}
Here $\nu$ is from the definition of $\be(\cdot)$. 
Clearly, the function $s\mapsto s+H_\varep(s)$ is an approximation of $\be(s)$.

Consider the regularized version of the Stefan problem \eqref{Eq:1:1}
\begin{equation}\label{Eq:reg}
\begin{aligned}
&\pl_t [u+H_\varep(u)] -\dvg\bl{A}(x,t,u, Du) = 0\quad \text{ weakly in }\> E_T.
\end{aligned}
\end{equation}
For each fixed $\varep$,  
the notion of  solution to \eqref{Eq:reg}
can be defined via a similar integral identity as in Section~\ref{S:1:2}.
A main difference now is that the function space for solutions becomes
\begin{equation*}
\left\{
\begin{array}{cc}
\dsty\int_0^u [1+H'_{\varep}(s)]s\,\d s\in C_{\loc}\big(0,T;L^1_{\loc}(E)\big),\\[5pt]
u\in L^p_{\loc}\big(0,T; W^{1,p}_{\loc}(E)\big).
\end{array}\right.
\end{equation*}
This notion does not require any {\it a priori} knowledge on the time derivative
and is similar to the one for \eqref{Eq:p-Laplace} in \cite[Chapter~II]{DB}, cf.~\eqref{Eq:func-space-1}.
The following theorem can be viewed as a ``cousin" of Theorem~\ref{Thm:1:1}.
\begin{theorem}
Let $\{u_\varep\}$ be a familiy of  weak solutions to \eqref{Eq:reg} with the uniform bound $M$,
 under the structure condition \eqref{Eq:1:2} for $p\in(p_*,2)$.  
Then $\{u_\varep\}$ is locally, equicontinuous in 
$E_T$.
More precisely, there is a modulus of continuity $\boldsymbol\om(\cdot)$,
determined by the data, $\dist_p(\mathcal{K};\,\Gm) $ and $M$, independent of $\varep$, 
such that
	\begin{equation*}
	\big|u_\varep(x_1,t_1)-u_\varep(x_2,t_2)\big|
	\le
	\boldsymbol\om\!\left(|x_1-x_2|+|t_1-t_2|^{\frac1p}\right)+8\varep,
	\end{equation*}
for every pair of points $(x_1,t_1), (x_2,t_2)\in \mathcal{K}$.
If in addition, the structure conditions \eqref{Eq:5:1:3} -- \eqref{Eq:5:1:4}
are satisfied, then there exists $\sig\in(0,1)$ depending only on $p$ and $N$, such that
$$\boldsymbol\om(r)=C \Big(\ln \frac{\dist_p(\mathcal{K};\,\Gm)}{r}\Big)^{-\sig}\quad\text{ for all }r\in\big(0, \dist_p(\mathcal{K};\,\Gm)\big)$$ 
with some $C>0$
 depending on the data and $M$.
\end{theorem}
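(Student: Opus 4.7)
The plan is to mimic the oscillation-reduction argument of Section~\ref{S:Thm-proof} for the family $\{u_\varepsilon\}$ with constants uniform in $\varepsilon$, and to truncate the iteration as soon as the current oscillation drops below the natural threshold $8\varepsilon$. The key structural observation that makes this possible is the following: since $H_\varepsilon$ is a Friedrichs mollification with kernel supported in $(-\varepsilon,\varepsilon)$ of a function that is constant on $\{s>0\}$ and on $\{s\le 0\}$, one has $H_\varepsilon(s)\equiv 0$ for $s>\varepsilon$ and $H_\varepsilon(s)\equiv -\nu$ for $s<-\varepsilon$; in particular $H'_\varepsilon$ vanishes outside $[-\varepsilon,\varepsilon]$. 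Consequently, for any level $k\ge\varepsilon$, on the set $[u_\varepsilon>k]$ the regularized equation \eqref{Eq:reg} reduces to the pure parabolic $p$-Laplacian $\partial_t u_\varepsilon-\dvg\bl{A}(\cdot,\cdot,u_\varepsilon,Du_\varepsilon)=0$, and $(u_\varepsilon-k)_+$ is a weak sub-solution to \eqref{Eq:p-Laplace} with constants independent of $\varepsilon$. The symmetric statement holds for $(u_\varepsilon-k)_-$ whenever $k\le-\varepsilon$. This decouples the analysis from the smoothing parameter away from a neighborhood of $\{u_\varepsilon=0\}$.

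Armed with this observation, I would first establish $\varepsilon$-independent analogues of the basic tools in Section~\ref{S:2}: the energy estimate (Proposition~\ref{Prop:2:1}), the De Giorgi lemma (Lemma~\ref{Lm:DG:int}), and the sub-solution truncation lemma (Lemma~\ref{Lm:sub-solution}). Since $s\mapsto s+H_\varepsilon(s)$ is smooth and non-decreasing, Steklov averaging in \eqref{Eq:reg} is straightforward, and the calculations leading to Proposition~\ref{Prop:2:1} go through verbatim once the jump-type boundary contributions are replaced by integrals of $H_\varepsilon$; for truncations at levels $|k|>\varepsilon$, these $H_\varepsilon$ contributions vanish entirely, so the resulting energy inequalities are exactly those of the parabolic $p$-Laplacian. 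The $L^1$ Harnack inequality (Lemma~\ref{Lm:L1}) and the expansion of positivity (Lemma~\ref{Lm:expansion}) then apply as stated.

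Next, I would run the two-alternative argument of Sections~\ref{S:3:0}--\ref{S:3:1} verbatim, under the restriction $\omega>8\varepsilon$. In the first alternative of \eqref{Eq:alternative-int}, Lemma~\ref{Lm:DG:int} yields \eqref{Eq:reduc-osc-1} directly. In the second alternative, assuming \eqref{Eq:mu-pm-0}$_1$, one sets $k=\mu^+-\tfrac18\omega$ and $v=\tfrac18\omega-(u_\varepsilon-k)_+$; the condition $\omega>8\varepsilon$ guarantees $k\ge\tfrac18\omega>\varepsilon$, so by the first paragraph $v$ is a non-negative weak super-solution to \eqref{Eq:p-Laplace}, and the computation of \eqref{Eq:measure-v}--\eqref{Eq:reduc-osc-2} carries over unchanged. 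The case \eqref{Eq:mu-pm-0}$_2$ is symmetric via a truncation from below at $k=\mu^-+\tfrac18\omega\le-\tfrac18\omega<-\varepsilon$. The induction of Section~\ref{S:induction} is then run with the identical recurrence $\omega_{n+1}=(1-\eta\omega_n^q)\omega_n$, but only up to the first index $n_\varepsilon$ with $\omega_{n_\varepsilon}\le 8\varepsilon$; for $n<n_\varepsilon$ the reduction of oscillation $\essosc_{Q_n}u_\varepsilon\le\omega_n$ holds exactly as in \eqref{Eq:osc-final}, while for $n\ge n_\varepsilon$ one accepts the trivial bound $\essosc_{Q_n}u_\varepsilon\le 8\varepsilon$.

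The modulus computation of Section~\ref{S:modulus} then gives, uniformly in $\varepsilon$, the bound $\essosc_{\widetilde{Q}_r}u_\varepsilon\le \boldsymbol\omega(r)+8\varepsilon$ with $\boldsymbol\omega$ the same logarithmic modulus as in Theorem~\ref{Thm:1:1} when \eqref{Eq:5:1:3}--\eqref{Eq:5:1:4} are assumed, and the iterated-logarithmic modulus of Section~\ref{S:nonoptimal-modulus} in the general case. The main obstacle is the careful verification of the first step: rigorously justifying the truncated sub/super-solution property for $(u_\varepsilon-k)_\pm$ when $|k|>\varepsilon$ within the weak function-space setting of \eqref{Eq:reg} (which carries no a priori time-derivative), and then checking that the comparison principle needed in the second part of Lemma~\ref{Lm:expansion} is available uniformly in $\varepsilon$ for the regularized operator. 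Once these technical points are secured, the remainder of the proof is a faithful transcription of Section~\ref{S:Thm-proof}, and equicontinuity of $\{u_\varepsilon\}$ follows with the claimed modulus.
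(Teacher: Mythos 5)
Your overall strategy coincides with the paper's: derive $\varepsilon$-uniform analogues of the preparatory tools, run the two-alternative oscillation reduction under the constraint $\omega>8\varepsilon$, stop the iteration once $\omega_n\le 8\varepsilon$, and absorb the $8\varepsilon$ into the modulus. The observation that $(u_\varepsilon-k)_+$ with $k\ge\varepsilon$ is a $p$-Laplacian sub-solution, and hence the $L^1$ Harnack inequality and expansion of positivity apply without any $\varepsilon$-dependence, is exactly the mechanism the paper uses; your concern about the comparison principle being ``available uniformly in $\varepsilon$'' is moot for the same reason, since it is applied to $v=\tfrac18\omega-(u_\varepsilon-k)_+$, which satisfies the $\varepsilon$-free $p$-Laplace type equation.

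There is, however, a concrete error in your treatment of the energy estimate and the De Giorgi step. You assert that ``for truncations at levels $|k|>\varepsilon$, these $H_\varepsilon$ contributions vanish entirely, so the resulting energy inequalities are exactly those of the parabolic $p$-Laplacian.'' This is correct for $(u_\varepsilon-k)_+$ with $k>\varepsilon$ and for $(u_\varepsilon-k)_-$ with $k<-\varepsilon$, but it is \emph{false} for $(u_\varepsilon-k)_-$ with $k>\varepsilon$ --- which is precisely the truncation used in the first alternative via Lemma~\ref{Lm:DG:int}, where $k=\mu^-+\tfrac14\omega$ can well exceed $\varepsilon$ while $u_\varepsilon$ ranges below $\varepsilon$ and even below zero. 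In that case the term $\int_u^k H'_\varepsilon(s)(k-s)\,\d s$ does not vanish. The paper handles this by estimating it uniformly in $\varepsilon$ by $\nu(u_\varepsilon-k)_-$, which preserves the Stefan-type structure (the linear-in-$(u-k)_-$ contribution) and therefore still yields the De Giorgi lemma with its $\omega$-dependent measure threshold $(\xi\omega)^{(N+p)/p}$. Your route --- claiming a pure $p$-Laplacian energy inequality and then invoking Lemma~\ref{Lm:DG:int} (whose statement and proof are built around the Stefan-type energy inequality) --- does not close logically; you need to verify that the Stefan-type energy inequality holds for \eqref{Eq:reg} uniformly in $\varepsilon$, not that the $H_\varepsilon$ terms disappear. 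Once that estimate is put in, the rest of your argument is sound and coincides with the paper's.
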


The subscript $\varep$ will be suppressed from $u$, $\mu^\pm$, $\om$, $\theta$, $\widetilde{\theta}$, etc.
The idea is to adapt the arguments in  Section~\ref{S:Thm-proof} and to determine the quantities, 
such as $\{\eta,\widetilde{c}, \bar{c}, q, \widetilde{q},\bar{q}\}$,
independent of $\varep$.
In this way the reduction of oscillation of $u$ can be achieved just like in  Section~\ref{S:Thm-proof}, independent of $\varep$.
Due to similarities, we will only present a sketchy argument in the following.

Let us first derive the energy estimates similar to Proposition~\ref{Prop:2:1}.
After standard calculations (cf.~\cite[Section~7]{Liao-Stefan}), the  energy estimates for the weak solution $u$ to \eqref{Eq:reg} becomes,
omitting the reference to $x_o$,
\begin{equation}\label{Eq:energy-approx}
\begin{aligned}
	\essup_{t_o-S<t<t_o}&\Big\{\int_{K_R\times\{t\}}	
	\z^p (u-k)_\pm^2\,\dx \pm \int_{K_R\times\{t\}}\int_k^u H_\varep'(s)(s-k)_\pm\,\d s\,\z^p\dx \Big\}\\
	&\quad+
	\iint_{Q_{R,S}}\z^p|D(u-k)_\pm|^p\,\dx\dt\\
	&\le
	\gm\iint_{Q_{R,S}}
		\Big[
		(u-k)^{p}_\pm|D\z|^p + (u-k)_\pm^2|\partial_t\z^p|
		\Big]
		\,\dx\dt\\
			&\quad\pm\iint_{Q_{R,S}}\int_k^u H_\varep'(s)(s-k)_\pm\,\d s |\pl_t\z^p|\, \dx\dt\\
			&\quad
	+\int_{K_R\times \{t_o-S\}} \z^p (u-k)_\pm^2\,\dx\\
	&\quad  \pm \int_{K_R\times\{t_o-S\}}\int_k^u H_\varep'(s)(s-k)_\pm\,\d s\,\z^p\dx.
\end{aligned}
\end{equation}
The three terms containing $H'_\varep$ here play the role of $\Phi_{\pm}$ in Proposition~\ref{Prop:2:1},
which preserve the singularity of $\be(\cdot)$ at the origin as $\varep\to0$.

Next we examine the results in Section~\ref{S:2} in the context of the regularized equation \eqref{Eq:reg}.
Let us consider the energy estimate \eqref{Eq:energy-approx} in the case of super-solution, i.e. $(u-k)_-$. 
The term containing $H'_\varep$ (together with the minus sign in the front) on the left-hand side is non-negative
and hence can be discarded. The first term containing $H'_\varep$ on the right-hand side
is estimated by
\[
\iint_{Q_{R,S}}\int_u^k H_\varep'(s)(s-k)_-\,\d s |\pl_t\z^p|\, \dx\dt
\le \nu\iint_{Q_{R,S}}(u-k)_- |\pl_t\z^p|\,\dx\dt.
\]
The second term containing $H'_\varep$ on the right-hand side is discarded 
because now $\z=0$ on the parabolic boundary of $Q_{R,S}$.
Using these remarks, one can perform the De Giorgi iteration in Lemma~\ref{Lm:DG:int}
and reach the same conclusion.

Furthermore, Lemma~\ref{Lm:expansion} and Lemma~\ref{Lm:L1}
are properties of weak super-solutions to the parabolic $p$-Laplacian,
whereas Lemma~\ref{Lm:sub-solution} continue to hold, if we assume that $k\ge\varep$.

With these preparatory tools at hand, we can start the proof as in Section~\ref{S:3:0}.
A change happens in Section~\ref{S:3:1} when we claim that the function $v$ is 
a non-negative, weak super-solution to the parabolic $p$-Laplacian \eqref{Eq:p-Laplace} with \eqref{Eq:1:2}
 in $Q_o$. In this case, we need to assume that $\frac18\om\ge\varep$ in view of \eqref{Eq:mu-pm-0}
 and the previously mentioned change to Lemma~\ref{Lm:sub-solution}.
As such, the rest of the proof remains unchanged, bearing in mind that violation of this  requirement only contributes
to an extra control of the oscillation by $\om\le8\varep$. 
This $\varep$ can then be incorporated into the derivation of the modulus of continuity.

\end{document}